\newtheorem{theorem}{Theorem}[section]
\newtheorem{lemma}[theorem]{Lemma}
\newtheorem{proposition}[theorem]{Proposition}
\newtheorem{corollary}[theorem]{Corollary}
\theoremstyle{definition}
\newtheorem{remark}[theorem]{Remark}
\newcommand{\IR}{\mathbb{R}}
\newcommand{\IC}{\mathbb{C}}
\newcommand{\IN}{\mathbb{N}}
\newcommand{\IP}{\mathbb{P}}
\newcommand{\IF}{\mathbb{F}}
\newcommand{\IE}{\mathbb{E}}
\newcommand{\Lop}{{\mathcal{L}}}
\newcommand{\Sec}{\mathrm{S}}
\newcommand{\cT}{\mathcal{T}}
\newcommand{\cR}{\mathcal{R}}
\renewcommand{\L}{\mathrm{L}}
\renewcommand{\H}{\mathrm{H}}
\newcommand{\B}{\mathrm{B}}
\newcommand{\C}{\mathrm{C}}
\newcommand{\W}{\mathrm{W}}
\renewcommand{\d}{\mathrm{d}}
\newcommand{\e}{\mathrm{e}}
\newcommand{\eps}{\varepsilon}
\DeclareMathOperator{\divergence}{div}
\DeclareMathOperator{\Id}{Id}
\newcommand{\dom}{\mathcal{D}}
\newcommand{\fa}{\mathfrak{a}}
\newcommand{\NSE}{\mathrm{(NSE)}}
\numberwithin{equation}{section}
\title[$\L^p$-theory of the Navier--Stokes equations on bounded Lipschitz domains]{On the $\L^p$-theory of the Navier--Stokes equations on three-dimensional bounded Lipschitz domains}
\author{Patrick Tolksdorf}
\address{Fachbereich Mathematik, Technische Universit\"at Darmstadt, Schlossgartenstr. 7, 64289 Darmstadt, Germany}
\email{tolksdorf@mathematik.tu-darmstadt.de}
\thanks{The author was supported by ``Studienstiftung des deutschen Volkes''.}
\keywords{Navier--Stokes equations, Strong solutions, Lipschitz domains, Maximal regularity, Gradient estimates}
\subjclass[2010]{35Q30, 76D05, 76D07, 76N10}
\date{\today}
\begin{document}
\begin{abstract}
On a bounded Lipschitz domain $\Omega \subset \IR^d$, $d \geq 3$, we continue the study of Shen~\cite{Shen-Stokes} and of Kunstmann and Weis~\cite{Kunstmann_Weis} of the Stokes operator on $\L^p_{\sigma} (\Omega)$. We employ their results in order to determine the domain of the square root of the Stokes operator as the space $\W^{1 , p}_{0 , \sigma} (\Omega)$ for $\lvert \frac{1}{p} - \frac{1}{2} \rvert < \frac{1}{2d} + \eps$ and some $\eps > 0$. This characterization provides gradient estimates as well as $\L^p$-$\L^q$-mapping properties of the corresponding semigroup. In the three-dimensional case this provides a means to show the existence of solutions to the Navier--Stokes equations in the critical space $\L^{\infty} (0 , \infty ; \L^3_{\sigma} (\Omega))$ whenever the initial velocity is small in the $\L^3$-norm. Finally, we present a different approach to the $\L^p$-theory of the Navier--Stokes equations by employing the maximal regularity proven by Kunstmann and Weis~\cite{Kunstmann_Weis}.
\end{abstract}
\maketitle

\section{Introduction}
\label{Sec: Introduction}

\noindent In this article we consider the incompressible Navier--Stokes equations
\begin{align*}
\NSE \quad \left\{
\begin{aligned}
 \partial_t u - \Delta u + (u \cdot \nabla) u + \nabla \pi &= f \quad &&\text{in } \Omega , t > 0 \\
 \divergence(u) &= 0 &&\text{in } \Omega , t > 0 \\
 u(0) &= a &&\text{in } \Omega \\
 u &= 0 &&\text{on } \partial \Omega , t > 0,
\end{aligned}
\right.
\end{align*}
on a bounded Lipschitz domain $\Omega \subset \IR^3$. Here, $u$ denotes a vector field which corresponds to the velocity of an incompressible fluid that governs $\Omega$, $\pi$ the pressure inside $\Omega$, $f$ an external force, and $a$ the initial velocity. While there exists extensive literature to this equation if $\Omega$ is smooth, see, e.g.,~\cite{Ladyzhenskaya, Sohr, Temam}, the investigation for bounded Lipschitz domains started fairly recently. For example one of the first existence results of strong solutions was given by Deuring and von Wahl~\cite{Deuring_von-Wahl} in 1995 and was ultimately improved by Mitrea and Monniaux~\cite{Mitrea_Monniaux}. In both articles the authors followed the classical approach of Fujita and Kato~\cite{Fujita_Kato}. Another existence result --- proven in a very short and elegant way --- was given by Taylor~\cite{Taylor}. However, it has to be noted, that all approaches use only $\L^2$-theory in order to establish the existence theorems. \par
It is well-known, that also the $\L^p$-theory is of great interest, for example for uniqueness and regularity questions. However, an $\L^p$-theory seemed to be out of reach for a long time as it was not even known that the Stokes operator generates a strongly continuous semigroup on $\L^p_{\sigma} (\Omega)$ for $p \neq 2$. Due to the boundedness properties of the Helmholtz projection, see~\cite{Fabes_Mendez_Mitrea}, Taylor conjectured in~\cite{Taylor}, that for each bounded Lipschitz domain, there exists $\eps > 0$ such that the Stokes operator generates an analytic semigroup on $\L^p_{\sigma} (\Omega)$ whenever $3 / 2 - \eps < p < 3 + \eps$. Taylor's conjecture was answered to the affirmative by Shen in the seminal paper~\cite{Shen-Stokes}. \par
To establish an $\L^p$-theory for the Navier--Stokes equations there are several ways one can take and we will present two of them in this article. The first is the classical approach to obtain mild solutions starting from $\L^3$-initial data via an iteration scheme, which was first performed by Giga and Miyakawa~\cite{Giga_Miyakawa} using fractional powers of the Stokes operator for bounded and smooth domains. The theory was extended to the whole space by Kato~\cite{Kato_iteration} and the approach was adjusted by Giga~\cite{Giga_iteration} for bounded and smooth domains. This approach requires certain $\L^p$-$\L^q$-mapping properties as well as gradient estimates of the Stokes semigroup. These estimates will be established in this work by using the boundedness of the $\H^{\infty}$-calculus of the Stokes operator proven by Kunstmann and Weis~\cite{Kunstmann_Weis}. \par
The second approach uses the maximal $\L^q$-regularity of the Stokes operator. This approach was initiated by Solonnikov~\cite{Solonnikov} and largely improved in the subsequent years, see, e.g.,~\cite{Amann , DHP , Geissert_et_al}. The property that the Stokes operator on $\L^p_{\sigma} (\Omega)$ has maximal $\L^q$-regularity was recently proven by Kunstmann and Weis~\cite{Kunstmann_Weis} for the same range of numbers $p$ as the analytic semigroup exists. Independently, this property was proven in the PhD-thesis of the author, see~\cite{Tolksdorf}. \par
For a further historical review the reader may consult the introductions in~\cite{Deuring_von-Wahl , Taylor , Mitrea_Monniaux}. \par
In the remainder of this introduction, we formulate the main results presented in this article. We refer to Section~\ref{Sec: Notation} for the respective notation. Note that the results dealing with the linear theory are formulated in $\IR^d$ with $d \geq 3$.

\begin{theorem}
\label{Thm: Square root}
Let $\Omega \subset \IR^d$, $d \geq 3$, be a bounded Lipschitz domain. Then there exists $\eps > 0$ depending only on $d$ and the Lipschitz character of $\Omega$ such that for all
\begin{align}
\label{Eq: Admissible interval of p's}
 \Big\lvert \frac{1}{p} - \frac{1}{2} \Big\rvert < \frac{1}{2 d} + \eps 
\end{align}
the domain of the square root of the Stokes operator $A_p$ on $\L^p_{\sigma} (\Omega)$ coincides with $\W^{1 , p}_{0 , \sigma} (\Omega)$, i.e.,
\begin{align*}
 \dom(A_p^{1 / 2}) = \W^{1 , p}_{0 , \sigma} (\Omega)
\end{align*}
with equivalence of the respective norms.
\end{theorem}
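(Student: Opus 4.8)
The plan is to establish the two-sided estimate $\|A_p^{1/2} u\|_{\L^p} \sim \|\nabla u\|_{\L^p}$ on a dense class of smooth solenoidal fields and then to pass to the closure, which identifies $\dom(A_p^{1/2})$ with $\W^{1,p}_{0,\sigma}(\Omega)$. First I would reduce the two inequalities to a single one by duality. Since the Stokes operator satisfies $A_p^* = A_{p'}$ and the admissible range \eqref{Eq: Admissible interval of p's} is invariant under $p \mapsto p'$ (because $\frac{1}{p'} - \frac{1}{2} = -(\frac{1}{p} - \frac{1}{2})$), it suffices to prove the Stokes--Riesz estimate $\|\nabla A_p^{-1/2} f\|_{\L^p} \lesssim \|f\|_{\L^p}$ for every admissible $p$. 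Indeed, writing $\cR_p := \nabla A_p^{-1/2}$, its $\L^p$-adjoint is $\cR_p^* = A_{p'}^{-1/2} \IP (-\divergence)$, and testing against $G = \nabla u$ for a smooth $u \in \W^{1,p'}_{0,\sigma}(\Omega)$ gives $\cR_p^*(\nabla u) = A_{p'}^{1/2} u$; hence the boundedness of $\cR_p$ yields the reverse inequality $\|A_{p'}^{1/2} u\|_{\L^{p'}} \lesssim \|\nabla u\|_{\L^{p'}}$ at the conjugate exponent. Running this for all admissible $p$ supplies both inequalities at each admissible exponent.

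The anchor of the argument is the case $p = 2$. There $A_2$ is the self-adjoint operator associated with the closed symmetric form $(u,v) \mapsto \int_\Omega \nabla u : \nabla v$ with form domain $\W^{1,2}_{0,\sigma}(\Omega)$, so Kato's form theory gives $\dom(A_2^{1/2}) = \W^{1,2}_{0,\sigma}(\Omega)$ together with the identity $\|A_2^{1/2} u\|_{\L^2} = \|\nabla u\|_{\L^2}$. Next I would invoke the bounded $\H^\infty$-calculus of $A_p$ on $\L^p_{\sigma}(\Omega)$ of Kunstmann and Weis~\cite{Kunstmann_Weis}: on the UMD space $\L^p_{\sigma}(\Omega)$ it yields the square-function equivalence $\|f\|_{\L^p} \sim \|(\int_0^\infty \abs{(t A_p)^{1/2} \e^{-t A_p} f}^2 \, \frac{\d t}{t})^{1/2}\|_{\L^p}$ and its dual. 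Combining these with the subordination formula $\cR_p f = \frac{1}{\sqrt{\pi}} \int_0^\infty \nabla \e^{-t A_p} f \, \frac{\d t}{\sqrt{t}}$ and standard square-function duality reduces the Riesz estimate, for every admissible exponent, to the gradient quadratic estimate
\begin{align*}
 \Big\| \Big( \int_0^\infty \abs{\sqrt{t}\, \nabla \e^{-t A_p} f}^2 \, \frac{\d t}{t} \Big)^{1/2} \Big\|_{\L^p} \lesssim \| f \|_{\L^p} .
\end{align*}
At $p = 2$ this last estimate is exactly the Kato quadratic estimate equivalent to the form identity above.

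It remains to propagate the quadratic estimate off $p = 2$, and this is the \emph{main obstacle}. On a Lipschitz domain no pointwise heat-kernel gradient bounds are available, so I would instead run Shen's real-variable scheme~\cite{Shen-Stokes}: first establish Gaffney-type $\L^2$--$\L^2$ off-diagonal bounds for the family $\sqrt{t}\, \nabla \e^{-t A}$, then upgrade them to $\L^2$--$\L^q$ bounds by means of the interior Caccioppoli and weak reverse H\"older inequalities for local solutions of the Stokes system, and finally feed the result into a Blunck--Kunstmann-type criterion for the $\L^p$-boundedness of square functions built from the semigroup. The self-improvement range of these reverse H\"older inequalities for the gradient of Stokes solutions, and hence the range in which the quadratic estimate survives, is precisely $\abs{\frac{1}{p} - \frac{1}{2}} < \frac{1}{2d} + \eps$, which is also the range furnished by Shen and by Kunstmann and Weis; this is where the exponent $\frac{1}{2d}$ enters. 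Once the Riesz estimate holds on the full admissible interval, the duality step of the first paragraph delivers the norm equivalence on smooth solenoidal fields, and the equivalence then extends to all of $\dom(A_p^{1/2})$ and $\W^{1,p}_{0,\sigma}(\Omega)$ by density, completing the identification.
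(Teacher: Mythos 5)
Your overall architecture --- reduce everything by duality to the boundedness of the Stokes--Riesz transform $\nabla A_p^{-1/2}$ on the conjugation-invariant range \eqref{Eq: Admissible interval of p's} --- is sound, and your duality bookkeeping ($\iota^* = \IP_{p'}$ and $\cR_p^*(\nabla u) = A_{p'}^{1/2} u$ on smooth solenoidal $u$) is exactly the mechanism the paper uses for the inclusion $\dom(A_p^{1/2}) \subset \W^{1,p}_{0,\sigma}(\Omega)$. From there, however, the two arguments diverge completely. The paper never proves a quadratic estimate for $\sqrt{t}\,\nabla \e^{-tA_p}$. It obtains the inclusion $\W^{1,p}_{0,\sigma}(\Omega) \subset \dom(A_p^{1/2})$ softly, from the complex-interpolation identity $\dom(A_p^{1/2}) = [\L^p_{\sigma}(\Omega), \dom(A_p)]_{1/2}$ (a consequence of the bounded $\H^\infty$-calculus of Kunstmann--Weis), the embedding $\W^{2,p}_{0,\sigma}(\Omega) \subset \dom(A_p)$, and the Mitrea--Monniaux identification $[\L^p_{\sigma}(\Omega), \W^{2,p}_{0,\sigma}(\Omega)]_{1/2} = \W^{1,p}_{0,\sigma}(\Omega)$. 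It then gets the Riesz bound at $p$ by writing $A_{p'}^{-1/2} \IP_{p'} \divergence(F) = A_{p'}^{1/2} A_{p'}^{-1} \IP_{p'} \divergence(F)$, applying the easy inclusion at $p'$, and invoking the Mitrea--Wright well-posedness theorem for the \emph{stationary} Stokes system with data $\divergence(F) \in (\W^{1,p}_0(\Omega;\IC^d))^*$, which supplies $\| \nabla A_{p'}^{-1} \IP_{p'} \divergence(F) \|_{\L^{p'}} \leq C \|F\|_{\L^{p'}}$. All the hard boundary regularity is thus imported from one quoted stationary estimate.

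Your plan instead attacks the Riesz bound head-on via Shen's real-variable scheme for the vertical square function of $\sqrt{t}\,\nabla \e^{-tA_p}$, and this is where the gap sits. First, \emph{interior} Caccioppoli and weak reverse H\"older inequalities are not enough: the exponent $2d/(d-1)$ in \eqref{Eq: Admissible interval of p's} is dictated by the behaviour of $\nabla u$ at the Lipschitz \emph{boundary}, so you would need a boundary weak reverse H\"older inequality, self-improving beyond $2d/(d-1)$, for local solutions of the Stokes \emph{resolvent or parabolic} system with the no-slip condition and an unknown pressure. That estimate is at least as deep as the Brown--Shen/Mitrea--Wright/Geng--Kilty results it would replace, and your sketch assumes it rather than proves it; the assertion that ``the self-improvement range is precisely \eqref{Eq: Admissible interval of p's}'' is the conclusion, not an input. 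Second, passing from the quadratic estimate for $\sqrt{t}\,\nabla \e^{-tA_p}$ to $\L^p$-boundedness of $\nabla A_p^{-1/2}$ by ``square-function duality'' is not automatic for $p \neq 2$: the known arguments (Blunck--Kunstmann, Auscher--Martell, or Shen's good-$\lambda$ method) additionally require $\L^2$--$\L^q$ off-diagonal bounds for $\sqrt{t}\,\nabla \e^{-tA_p}$ up to the boundary, which hinge on the same missing estimate. So your route is a legitimate alternative programme, but as written it relocates rather than resolves the main difficulty, whereas the paper's interpolation-plus-duality argument closes the theorem using only cited results.
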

A corollary are the $\L^p$-$\L^q$-mapping properties and gradient estimates of the Stokes semigroup.

\begin{corollary}
\label{Cor: Lp-Lq}
Let $\Omega \subset \IR^d$, $d \geq 3$, be a bounded Lipschitz domain. Then there exists $\eps > 0$ depending only on $d$ and the Lipschitz character of $\Omega$ such that for all $p \leq q$ that satisfy
\begin{align*}
 \Big\lvert \frac{1}{p} - \frac{1}{2} \Big\rvert < \frac{1}{2 d} + \eps \quad \text{and} \quad \Big\lvert \frac{1}{q} - \frac{1}{2} \Big\rvert < \frac{1}{2 d} + \eps
\end{align*}
the Stokes semigroup satisfies the estimates
\begin{align*}
 \| \e^{- t A_p} f \|_{\L^q_{\sigma} (\Omega)} \leq C t^{- \frac{d}{2} (\frac{1}{p} - \frac{1}{q})} \| f \|_{\L^p_{\sigma} (\Omega)} \qquad (f \in \L^p_{\sigma} (\Omega))
\end{align*}
and
\begin{align*}
 \| \nabla \e^{- t A_p} f \|_{\L^p(\Omega ; \IC^{d^2})} \leq C t^{- \frac{1}{2}} \| f \|_{\L^p_{\sigma} (\Omega)} \qquad (f \in \L^p_{\sigma} (\Omega)).
\end{align*}
Here, the constants $C$ are independent of $t$ and $f$.
\end{corollary}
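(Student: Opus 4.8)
The plan is to read off both estimates from Theorem~\ref{Thm: Square root} together with three structural facts valid throughout the admissible range~\eqref{Eq: Admissible interval of p's}: the operator $A_p$ generates a bounded analytic semigroup (Shen), it admits a bounded $\H^{\infty}$-calculus and hence bounded imaginary powers (Kunstmann and Weis), and $0 \in \rho(A_p)$ on the bounded domain $\Omega$. The last point makes $A_p^{-1/2}$ bounded, renders $\|A_p^{1/2} \argdot\|_{\L^p_{\sigma}(\Omega)}$ an equivalent norm on $\dom(A_p^{1/2})$, and forces exponential decay of the semigroup, so that the analytic-smoothing bounds $\|A_p^{\gamma} \e^{-tA_p} f\|_{\L^p_{\sigma}(\Omega)} \le C t^{-\gamma} \|f\|_{\L^p_{\sigma}(\Omega)}$ hold uniformly for all $t > 0$. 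For the gradient estimate I would then combine Theorem~\ref{Thm: Square root} with the invertibility of $A_p$ to get $\|\nabla g\|_{\L^p(\Omega;\IC^{d^2})} \le C \|A_p^{1/2} g\|_{\L^p_{\sigma}(\Omega)}$ for $g \in \dom(A_p^{1/2})$ (the lower-order term being absorbed via $\|g\|_{\L^p_{\sigma}(\Omega)} = \|A_p^{-1/2} A_p^{1/2} g\|_{\L^p_{\sigma}(\Omega)} \le C \|A_p^{1/2} g\|_{\L^p_{\sigma}(\Omega)}$), and apply it to $g = \e^{-tA_p} f$ with $\gamma = 1/2$.

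For the $\L^p$-$\L^q$ estimate the idea is to route the smoothing through a fractional power. Setting $\beta := \tfrac{d}{2}(\tfrac1p - \tfrac1q) \in [0, \tfrac12 + d\eps)$ and writing $\e^{-tA_p} f = A_p^{-\beta} \big( A_p^{\beta} \e^{-tA_p} f \big)$, the smoothing bound above (with $\gamma = \beta$) reduces everything to the continuous embedding $A_p^{-\beta} \colon \L^p_{\sigma}(\Omega) \to \L^q_{\sigma}(\Omega)$, that is $\dom(A_p^{\beta}) \hookrightarrow \L^q_{\sigma}(\Omega)$ with $\tfrac1q = \tfrac1p - \tfrac{2\beta}{d}$. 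The two bounds then multiply to the asserted decay $t^{-\frac{d}{2}(\frac1p - \frac1q)}$, uniformly in $t$.

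To establish the embedding I would interpolate. The bounded imaginary powers give $[\L^p_{\sigma}(\Omega), \dom(A_p^{1/2})]_\theta = \dom(A_p^{\theta/2})$, and Theorem~\ref{Thm: Square root} identifies the endpoint with $\W^{1,p}_{0,\sigma}(\Omega)$. Interpolating the inclusions $\L^p_{\sigma}(\Omega) \hookrightarrow \L^p(\Omega)$ and $\W^{1,p}_{0,\sigma}(\Omega) \hookrightarrow \W^{1,p}(\Omega)$ yields $\dom(A_p^{\theta/2}) \hookrightarrow [\L^p(\Omega), \W^{1,p}(\Omega)]_\theta = \H^{\theta,p}(\Omega)$, the complex interpolation being legitimate because bounded Lipschitz domains admit a Sobolev extension operator. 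Choosing $\theta = 2\beta$ and invoking the classical fractional Sobolev embedding $\H^{\theta,p}(\Omega) \hookrightarrow \L^q(\Omega)$, $\tfrac1q = \tfrac1p - \tfrac{\theta}{d}$, then gives precisely the embedding required above.

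The step I expect to be the main obstacle is the critical Sobolev exponent. In the borderline dimension $d = 3$ the interval~\eqref{Eq: Admissible interval of p's} extends slightly beyond $p = d$, so the endpoint $\dom(A_p^{1/2}) = \W^{1,p}_{0,\sigma}(\Omega)$ is (super)critical and a direct embedding into $\L^{p^*}$ with $\tfrac1{p^*} = \tfrac1p - \tfrac1d$, or into $\L^{\infty}$, is either unavailable or destroys the sharp scaling. Retaining the full first-order space $\W^{1,p}(\Omega)$ in the interpolation and only afterwards passing to the sub-critical embedding $\H^{\theta,p}(\Omega) \hookrightarrow \L^q(\Omega)$ --- which is legitimate as long as $q$ stays finite, i.e. $\theta < d/p$ --- is exactly what resolves this. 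A separate, milder point is that the construction needs $\beta \le \tfrac12$, i.e. a gain $\tfrac1p - \tfrac1q \le \tfrac1d$; a larger gain can occur only when $p < 2 < q$, and there I would split it through the midpoint exponent $2$ using the semigroup law and the consistency of the Stokes semigroups on the scale $\{\L^r_{\sigma}(\Omega)\}_r$, writing $\e^{-tA_p} = \e^{-(t/2)A_2} \e^{-(t/2)A_p}$, so that each factor has gain at most $\tfrac1{2d} + \eps \le \tfrac1d$ (shrinking $\eps$ if necessary) and the two exponents add up correctly. All constants depend only on $d$ and the Lipschitz character, being inherited from Theorem~\ref{Thm: Square root} and the cited results of Shen and of Kunstmann and Weis.
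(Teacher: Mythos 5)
Your argument is correct, and its gradient half coincides with the paper's: both deduce $\|\nabla \e^{-tA_p}f\|_{\L^p(\Omega;\IC^{d^2})} \le C\|A_p^{1/2}\e^{-tA_p}f\|_{\L^p_{\sigma}(\Omega)} \le Ct^{-1/2}\|f\|_{\L^p_{\sigma}(\Omega)}$ from Theorem~\ref{Thm: Square root}, the invertibility of $A_p$, and the analytic smoothing estimate $\|A^{1/2}\e^{-tA}x\|\le Ct^{-1/2}\|x\|$ cited from \cite[Prop.~3.4.3]{Haase}. For the $\L^p$--$\L^q$ estimate, however, you take a genuinely different route. The paper applies the Gagliardo--Nirenberg inequality $\|u\|_{\L^q} \le C\|\nabla u\|_{\L^p}^{a}\|u\|_{\L^p}^{1-a}$ with $\frac{1}{q}=\frac{1}{p}-\frac{a}{d}$ directly to $u=\e^{-tA_p}f$, so the gradient bound just proved and the boundedness of the semigroup immediately yield the decay $t^{-a/2}=t^{-\frac{d}{2}(\frac{1}{p}-\frac{1}{q})}$; no fractional powers other than $1/2$, no interpolation identity for $\dom(A_p^{\beta})$, and no Bessel-potential Sobolev embedding are needed. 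You instead factor $\e^{-tA_p}=A_p^{-\beta}\bigl(A_p^{\beta}\e^{-tA_p}\bigr)$ and prove $\dom(A_p^{\beta})\hookrightarrow \L^q_{\sigma}(\Omega)$ by interpolating Theorem~\ref{Thm: Square root} against $\L^p_{\sigma}(\Omega)$ and then embedding $\H^{2\beta,p}(\Omega)\hookrightarrow\L^q(\Omega)$; this is also sound --- the identity $[\L^p_{\sigma}(\Omega),\dom(A_p^{1/2})]_{\theta}=\dom(A_p^{\theta/2})$ follows from the bounded $\H^{\infty}$-calculus and \eqref{Eq: Complex interpolation} by reiteration --- but it routes through more machinery to reach the same exponent. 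Both arguments meet the same obstruction when $\frac{1}{p}-\frac{1}{q}>\frac{1}{d}$ (equivalently $a>1$, respectively $2\beta>1$), which can only occur for $p<2<q$; your remedy of splitting $\e^{-tA_p}=\e^{-(t/2)A_2}\e^{-(t/2)A_p}$ through the exponent $2$ via consistency of the semigroups is exactly what is needed there and is a point the paper's one-line proof leaves implicit.
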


As we mentioned before, for the $\L^p$-theory of the Navier--Stokes equations we pursue two approaches, delivering two types of theorems. The first is derived by following the classical approach of Giga~\cite{Giga_iteration} and Kato~\cite{Kato_iteration}. Note that we take $f = 0$ in $\NSE$ in this theorem.

\begin{theorem}
\label{Thm: Critical space}
Let $\Omega \subset \IR^3$ be a bounded Lipschitz domain. Then there exists $\eps > 0$ depending only on $d$ and the Lipschitz character of $\Omega$ such that for all $3 \leq r < 3 + \eps$ and all $a \in \L^r_{\sigma} (\Omega)$ the following statements are valid.
\begin{enumerate}
 \item There exists $T_0 > 0$ and a mild solution $u : [0 , T_0) \to \L^r_{\sigma} (\Omega)$ to $\NSE$ with $f = 0$ and initial velocity $a$ that satisfies for all $r \leq p < 3 + \eps$ with $1 / r - 1 / p < 1 / 6$
 \begin{align*}
  t \mapsto t^{\frac{3}{2} (\frac{1}{r} - \frac{1}{p})} u (t) &\in \B\C ([0 , T_0) ; \L^p_{\sigma} (\Omega)), \\
  t \mapsto t^{\frac{1}{2} + \frac{3}{2} (\frac{1}{r} - \frac{1}{p})} \nabla u(t) &\in \B \C ([0 , T_0) ; \L^p (\Omega ; \IC^9)).
 \end{align*}
Moreover,
\begin{align*}
 \| u(t) - a \|_{\L^r_{\sigma} (\Omega)} &\to 0 \quad \text{as} \quad t \searrow 0, \\
  t^{\frac{1}{2} + \frac{3}{2} (\frac{1}{r} - \frac{1}{p})} \| \nabla u(t) \|_{\L^p(\Omega ; \IC^9)} &\to 0 \quad \text{as} \quad t \searrow 0,
\end{align*}
and if $r < p < 3 + \eps$, then
\begin{align*}
 t^{\frac{3}{2} (\frac{1}{r} - \frac{1}{p})} \|u(t)\|_{\L^p_{\sigma} (\Omega)} &\to 0 \quad \text{as} \quad t \searrow 0.
\end{align*}
 \item If $r > 3$, there exists a constant $C > 0$ depending only on $r$, $p$, and the constants in Corollary~\ref{Cor: Lp-Lq}, such that
 \begin{align*}
  T_0 \geq C \|a\|^{- \frac{2 r}{r - 3}}_{\L^r_{\sigma} (\Omega)}.
 \end{align*}
 \item For all $3 \leq p < 3 + \eps$ there are positive constants $C_1 , C_2 > 0$ depending only on $p$ and the constants in Corollary~\ref{Cor: Lp-Lq} such that if $\|a\|_{\L^3_{\sigma} (\Omega)} < C_1$ the mild solution is global, i.e., $T_0 = \infty$. Moreover, this solution satisfies the estimates
 \begin{align*}
 \begin{aligned}
  \|u(t)\|_{\L^p_{\sigma} (\Omega)} &\leq C_2 t^{\frac{3}{2 p} - \frac{1}{2}} \quad &&(0 < t < \infty) \\
  \| \nabla u (t) \|_{\L^p (\Omega ; \IC^9)} &\leq C_2 t^{\frac{3}{2 p} - 1} &&(0 < t < \infty).
 \end{aligned}
 \end{align*}
\end{enumerate}
\end{theorem}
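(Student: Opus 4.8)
The plan is to solve the mild formulation of $\NSE$ (with $f=0$),
\[ u(t) = \e^{-t A} a - \int_0^t \e^{-(t-s) A} \IP\big[(u(s)\cdot\nabla)u(s)\big]\,\d s =: \e^{-tA} a + \B(u,u)(t), \]
by a Fujita--Kato fixed point argument; here $\IP$ is the Helmholtz projection and I suppress the subscript on the (consistent) Stokes semigroup $\e^{-tA}$. Fix an auxiliary exponent $p$ with $3<p<3+\eps$, $p\ge r$ and $\tfrac1r-\tfrac1p<\tfrac16$, set $\beta_p=\tfrac32(\tfrac1r-\tfrac1p)$ and $\gamma_p=\tfrac12+\beta_p$, and work in
\[ X_T=\Big\{u:\ \|u\|_{X_T}:=\sup_{0<t<T} t^{\beta_p}\|u(t)\|_{\L^p_\sigma}+\sup_{0<t<T}t^{\gamma_p}\|\nabla u(t)\|_{\L^p}<\infty\Big\}. \]
I would then invoke the standard lemma: if $\rho:=\|\e^{-\argdot A}a\|_{X_T}$ and $\|\B(u,v)\|_{X_T}\le\eta\|u\|_{X_T}\|v\|_{X_T}$, then $4\rho\eta<1$ produces a unique fixed point with $\|u\|_{X_T}\le2\rho$.

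The analytic core is the bilinear bound, where Corollary~\ref{Cor: Lp-Lq} enters. Since $\divergence u=0$, Hölder gives $\|(u\cdot\nabla)v\|_{\L^{p/2}}\le\|u\|_{\L^p}\|\nabla v\|_{\L^p}$, and for $p$ close to $3$ the exponent $p/2$ still satisfies~\eqref{Eq: Admissible interval of p's}, so $\IP$ is bounded on $\L^{p/2}$ and the semigroup maps $\L^{p/2}_\sigma\to\L^p_\sigma$. Splitting $\e^{-\tau A}=\e^{-\tau A/2}\e^{-\tau A/2}$ and combining the $\L^{p/2}$--$\L^p$ smoothing with the gradient estimate of Corollary~\ref{Cor: Lp-Lq} yields $\|\e^{-\tau A}\IP(u\cdot\nabla)v\|_{\L^p}\lesssim\tau^{-3/(2p)}\|u\|_{\L^p}\|\nabla v\|_{\L^p}$ and $\|\nabla\e^{-\tau A}\IP(u\cdot\nabla)v\|_{\L^p}\lesssim\tau^{-1/2-3/(2p)}\|u\|_{\L^p}\|\nabla v\|_{\L^p}$. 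Inserting these into the Duhamel integral and bounding $t^{\beta_p}\|u\|_{\L^p},\,t^{\gamma_p}\|\nabla u\|_{\L^p}\le\|u\|_{X_T}$ reduces everything to the Beta integrals $\int_0^t(t-s)^{-a}s^{-(2\beta_p+1/2)}\,\d s$ with $a\in\{3/(2p),\,1/2+3/(2p)\}$. A direct computation shows these equal $t^{-\beta_p+\theta}$ resp.\ $t^{-\gamma_p+\theta}$ with $\theta=\tfrac12-\tfrac{3}{2r}=\tfrac{r-3}{2r}\ge0$, giving $\|\B(u,v)\|_{X_T}\le C\,T^{\theta}\|u\|_{X_T}\|v\|_{X_T}$. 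Convergence requires $a<1$, i.e.\ $p>3$, and $2\beta_p+\tfrac12<1$, which is exactly the hypothesis $\tfrac1r-\tfrac1p<\tfrac1{2\cdot3}$.

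When $r>3$ we have $\theta>0$, so the bilinear constant $\eta(T)=CT^\theta$ vanishes as $T\searrow0$; since $\rho\lesssim\|a\|_{\L^r}$ by Corollary~\ref{Cor: Lp-Lq}, the condition $4\rho\eta(T)<1$ holds once $T\lesssim\|a\|_{\L^r}^{-1/\theta}=\|a\|_{\L^r}^{-2r/(r-3)}$, giving the local solution of part~(1) together with the lifespan bound of part~(2) --- in effect the scaling $u\mapsto\lambda u(\lambda\argdot,\lambda^2\argdot)$ of $\NSE$. When $r=3$ we have $\theta=0$ and $\eta$ is independent of $T$; here I would use $t^{\beta_p}\|\e^{-tA}a\|_{\L^p}\to0$ as $t\to0$ --- valid for $a\in\L^3_\sigma$ by approximation by $\L^p_\sigma$-functions and the $\L^p$--$\L^p$ bound on the remainder --- to obtain $\rho=\rho(T)\to0$, hence a (non-quantitative) $T_0>0$, proving part~(1); and for $\|a\|_{\L^3}<C_1$ the scale-invariant bound $\rho(\infty)\lesssim\|a\|_{\L^3}$ holds uniformly, forcing $T_0=\infty$ with $\|u\|_{X_\infty}\lesssim\|a\|_{\L^3}$, which is part~(3). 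The convergence $\|u(t)-a\|_{\L^r}\to0$ follows from strong continuity of the Stokes semigroup together with the Duhamel term being $o(1)$ in $\L^r$, the little-$o$ weighted limits from the corresponding property of the linear part, and the full family of estimates for all admissible $r\le p<3+\eps$ by re-running the above at the other exponents.

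I expect the main obstacle to be the criticality of the $\L^3$-endpoint. Both Beta integrals close only for $p$ \emph{strictly} above $3$, so the whole scheme hinges on Corollary~\ref{Cor: Lp-Lq} furnishing the $\L^p$--$\L^q$ and gradient estimates on an open range around $p=3$; this is precisely where the $\eps>0$ is indispensable, and also what keeps $p/2$ admissible so that $\IP$ and the semigroup act on $\L^{p/2}$. The second delicate point is that the critical case $r=3$ supplies no smallness for free: local solvability then rests entirely on the density argument giving $\rho(T)\to0$, and global solvability on the exact scale invariance that bounds $\rho(\infty)$ by $\|a\|_{\L^3}$.
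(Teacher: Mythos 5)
Your proposal is correct and is essentially the argument the paper has in mind: the paper omits the proof, referring to the successive-approximation scheme of Giga and Kato driven by the $\L^p$--$\L^q$ and gradient estimates of Corollary~\ref{Cor: Lp-Lq}, and your bilinear fixed-point formulation in the weighted space $X_T$ is the same iteration with the same Beta-function computations, admissibility of $p/2$, and density/scaling arguments at the critical exponent $r=3$.
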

The final result uses the approach via maximal $\L^q$-regularity of the Stokes operator in order to obtain strong solutions to the Navier--Stokes equations.

\begin{theorem}
\label{Thm: Solutions via maximal regularity}
Let $\Omega \subset \IR^3$ be a bounded Lipschitz domain. Then there exists $\eps > 0$ depending only on $d$ and the Lipschitz character of $\Omega$ such that for all $2 \leq p < 3 + \eps$ and all numbers $q$ satisfying $2 \leq q < \infty$ if $p = 2$ and
\begin{align*}
 \frac{2 (p + 1)}{3} < q < \infty 
\end{align*}
if $2 < p < 3 + \eps$ the following statement is valid: There exists $C > 0$ such that for all initial velocities $a$ in the real interpolation space $(\L^p_{\sigma} (\Omega) , \dom(A_p))_{1 - 1 / q , q}$ and all $f \in \L^q(0 , \infty ; \L^p (\Omega ; \IC^3))$ with
\begin{align*}
 \| a \|_{(\L^p_{\sigma} (\Omega) , \dom(A_p))_{1 - 1 / q , q}} + \| f \|_{\L^q(0 , \infty ; \L^p (\Omega ; \IC^3))} < C
\end{align*}
there exists a strong solution $u$ to $\NSE$ in the space
\begin{align*}
 \W^{1 , q}(0 , \infty ; \L^p_{\sigma} (\Omega)) \cap \L^q(0 , \infty ; \dom(A_p)).
\end{align*}
\end{theorem}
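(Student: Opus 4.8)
The plan is to recast $\NSE$ as an abstract semilinear Cauchy problem and to solve it by a contraction argument built on the maximal $\L^q$-regularity of $A_p$. Applying the Helmholtz projection $\IP$ to eliminate the pressure, a strong solution is a fixed point of
\begin{align*}
 \partial_t u + A_p u = \IP f - \IP (u \cdot \nabla) u, \qquad u(0) = a,
\end{align*}
sought in the space $\IE := \W^{1,q}(0,\infty;\L^p_\sigma(\Omega)) \cap \L^q(0,\infty;\dom(A_p))$. Since $\Omega$ is bounded, $A_p$ is invertible and $-A_p$ generates an exponentially stable semigroup, so the maximal $\L^q$-regularity of Kunstmann and Weis holds on the whole half-line with a time-independent constant; the linear map sending $(g,a)$ to the solution of $\partial_t u + A_p u = g$, $u(0)=a$, is therefore bounded from $\L^q(0,\infty;\L^p_\sigma(\Omega)) \times X_\gamma$ into $\IE$, where $X_\gamma := (\L^p_\sigma(\Omega),\dom(A_p))_{1-1/q,q}$ is the sharp trace space at $t=0$. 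At this point I would record the embedding $\IE \hookrightarrow \B\C([0,\infty);X_\gamma)$ and the identity $\IP(u\cdot\nabla)u = \IP\divergence(u\otimes u)$, which lets me distribute the derivative in the nonlinear estimate.

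Next I would set up the fixed-point map. For $v$ in a ball $\overline{B}_R \subset \IE$, let $\Phi(v)$ be the unique solution in $\IE$ of the linear problem with right-hand side $g = \IP f - \IP(v\cdot\nabla)v$ and initial value $a$. Maximal regularity gives $\|\Phi(v)\|_{\IE} \le C\big(\|a\|_{X_\gamma} + \|f\|_{\L^q(0,\infty;\L^p)} + \|\IP(v\cdot\nabla)v\|_{\L^q(0,\infty;\L^p)}\big)$, while $\Phi(v_1)-\Phi(v_2)$ is controlled by the corresponding difference of nonlinear terms. Everything thus reduces to the bilinear estimate
\begin{align*}
 \| \IP(u\cdot\nabla)v \|_{\L^q(0,\infty;\L^p)} \le C \, \|u\|_{\IE} \, \|v\|_{\IE},
\end{align*}
after which the smallness of $\|a\|_{X_\gamma}+\|f\|_{\L^q(0,\infty;\L^p)}$ guarantees that $\Phi$ maps a small ball into itself and is a contraction there. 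The Banach fixed-point theorem then yields a global solution in $\IE$, and de Rham's lemma recovers the pressure, so that $u$ is a genuine strong solution of $\NSE$.

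The heart of the argument, and the step I expect to be the main obstacle, is the bilinear estimate. My approach is to interpolate the two defining conditions of $\IE$ via the mixed derivative theorem, obtaining $\IE \hookrightarrow \H^{\theta,q}(0,\infty;\dom(A_p^{1-\theta}))$ for $\theta\in[0,1]$, and then to convert this into mixed space-time Lebesgue bounds. Here Theorem~\ref{Thm: Square root} enters decisively: it identifies $\dom(A_p^{1/2})=\W^{1,p}_{0,\sigma}(\Omega)$, so that complex interpolation with $\L^p_\sigma(\Omega)$ (legitimate because $A_p$ has bounded imaginary powers) furnishes, for $\sigma\in[0,\tfrac12]$, the Sobolev embeddings $\dom(A_p^{\sigma})\hookrightarrow\L^{r}(\Omega)$ with $\tfrac1r=\tfrac1p-\tfrac{2\sigma}{d}$, together with the gradient bound $\nabla\colon\dom(A_p^{1/2})\to\L^p(\Omega)$. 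Combining a Sobolev embedding in time ($\H^{\theta,q}\hookrightarrow\L^{\bar q}$) with these spatial embeddings and then applying Hölder's inequality in space and time to the product $u\cdot\nabla v$, the claim follows once the resulting exponents balance.

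Carrying out this bookkeeping is exactly where the constraint $q>\tfrac{2(p+1)}{3}$ (for $2<p<3+\eps$) is forced, and where the restriction to $p<3+\eps$ is essential: on a Lipschitz domain only the square-root domain is identified, so no spatial regularity of $\dom(A_p^\sigma)$ beyond $\sigma=\tfrac12$ is available, and the limited smoothing must be compensated by the time integrability encoded in $q$. I expect the delicate point to be precisely that the gradient can be placed only in $\L^p(\Omega)$ through $\dom(A_p^{1/2})$, forcing the partner factor to absorb the remaining integrability; tracking which combinations of $\theta$ and $\sigma\le\tfrac12$ keep the space-time exponents admissible is the technical core. The endpoint $p=2$, where every $q\in[2,\infty)$ is allowed, I would treat by the same scheme, with the borderline case $q=2$ resting on the classical $\L^2$-structure.
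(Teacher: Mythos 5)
Your overall architecture (project with $\IP_p$, solve the linearized problem by maximal $\L^q$-regularity on $(0,\infty)$ with the trace space $(\L^p_\sigma(\Omega),\dom(A_p))_{1-1/q,q}$, and contract on a small ball, reducing everything to a bilinear estimate $\|(u\cdot\nabla)v\|_{\L^q(0,\infty;\L^p)}\leq C\|u\|_{\IE}\|v\|_{\IE}$) coincides with the paper's. However, your proposed proof of the bilinear estimate has a genuine gap: you state explicitly that you will only use the square-root identification $\dom(A_p^{1/2})=\W^{1,p}_{0,\sigma}(\Omega)$ together with interpolation, so that the spatial information available for $v(t)\in\dom(A_p)$ is capped at $\W^{1,p}$. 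With that input the spatial exponents do not balance for $2\leq p\leq 3$, which is most of the theorem's range and includes the endpoint $p=2$. Indeed, H\"older forces $\|u\cdot\nabla v\|_{\L^r}$ with $\tfrac1r=\tfrac1{r_1}+\tfrac1{r_2}$ and $\tfrac1{r_1}+\tfrac1{r_2}=\tfrac1p$; since the gradient can only be placed in $\L^p$, you need $u(t)\in\L^\infty(\Omega)$, and the best your scheme gives is $u(t)\in\L^{p^*}$ with $\tfrac1{p^*}=\tfrac1p-\tfrac13$, whence $u\cdot\nabla v\in\L^r$ only for $\tfrac1r=\tfrac2p-\tfrac13>\tfrac1p$ when $p<3$ (for $p=2$ this lands in $\L^{3/2}$, not $\L^2$), and $p=3$ is the failing endpoint $\W^{1,3}\hookrightarrow\L^\infty$. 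No choice of the time regularity $\theta$ can repair a pointwise-in-time spatial deficit.

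The paper closes this gap with two additional, genuinely deep inputs that are \emph{not} consequences of Theorem~\ref{Thm: Square root}: the embeddings $\dom(A_2)\subset\H^{3/2,2}(\Omega;\IC^3)$ (Brown--Shen) and $\dom(A_p)\subset\H^{s+1/p,p}(\Omega;\IC^3)$ for $s\in[0,1)$, $p>2$ (Mitrea--Wright), i.e.\ regularity of the full domain strictly beyond one derivative, together with the Brown--Shen inequality $\|u\|_{\L^\infty}\leq C\|\nabla u\|_{\L^2}^{1/2}\|A_2u\|_{\L^2_\sigma}^{1/2}$ on $\dom(A_2)$. The argument then estimates $\|(v\cdot\nabla)w\|_{\L^p}\leq\|v\|_{\L^\infty}\|\nabla w\|_{\L^p}$, applies H\"older in time, and uses the mixed derivative theorem to get $\IE\subset\L^{3q}(0,\infty;\W^{1,p}(\Omega;\IC^3))$; the admissibility condition $s=3q/(3q-2)<1+1/p$ is precisely what produces the constraint $q>2(p+1)/3$ (and $s\leq 3/2$ gives $q\geq 2$ at $p=2$). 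So the restriction on $q$ is forced by the \emph{extra} $1/p$ (resp.\ $1/2$) spatial derivatives of $\dom(A_p)$ beyond $\W^{1,p}$, not by the square-root theory; to complete your proof you would need to import these embedding results, at which point you essentially recover the paper's argument.
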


If $\Omega$ is bounded and smooth, then Theorem~\ref{Thm: Square root} is known due to Giga~\cite{Giga_fractional} and Corollary~\ref{Cor: Lp-Lq} and Theorem~\ref{Thm: Critical space} are known due to Giga~\cite{Giga_iteration}. Finally, for a theorem in the fashion of Theorem~\ref{Thm: Solutions via maximal regularity} see Amann~\cite{Amann} or Giga and Sohr~\cite{Giga_Sohr}. \par
This article is organized as follows. In Section~\ref{Sec: Notation} we provide the required notation as well as some important facts about the Stokes operator on bounded Lipschitz domains. Section~\ref{Sec: Square root} is concerned with the proofs of Theorem~\ref{Thm: Square root} and Corollary~\ref{Cor: Lp-Lq}. The final Section~\ref{Sec: Nonlinear} is split into two parts. Here, the first part deals with Theorem~\ref{Thm: Critical space} and the second with Theorem~\ref{Thm: Solutions via maximal regularity}. A precise definition of the notion ``solution'' is given in the respective subsections.

\subsection*{Acknowledgments}
I would like to thank Robert Haller-Dintelmann for the supervision and the support during the time of my PhD-studies.

\section{Notation and preliminary results}
\label{Sec: Notation}

First, we fix some notation. In the whole article, the space dimension of the underlying Euclidean space will be $d \geq 3$. An open set $\Omega \subset \IR^d$ will be called a bounded Lipschitz domain if the boundary can locally be expressed as the graph of a Lipschitz continuous function. The domain of a linear operator $A$ on a Banach space $X$ is denoted by $\dom(A)$ and for an interval $I \subset \IR$ we write $\B\C(I ; X)$ for all bounded and continuous functions with values in $X$. \par
For an open and bounded set $\Omega \subset \IR^d$ and $1 < p < \infty$ the $\L^p$-space of solenoidal vector fields $\L^p_{\sigma} (\Omega)$ is defined as the closure of $\C_{c , \sigma}^{\infty} (\Omega)$ in $\L^p(\Omega ; \IC^d)$, where
\begin{align*}
 \C_{c , \sigma}^{\infty} (\Omega) := \{ \varphi \in \C_c^{\infty} (\Omega ; \IC^d) : \divergence(\varphi) = 0 \}.
\end{align*}
The first-order Sobolev space of solenoidal vector fields $\W^{1 , p}_{0 , \sigma} (\Omega)$ is defined as the closure of $\C_{c , \sigma}^{\infty} (\Omega)$ in $\W^{1 , p} (\Omega ; \IC^d)$ and the space of $\L^p$-integrable gradient fields is defined by
\begin{align*}
 \nabla \W^{1 , p} (\Omega) := \{ \nabla u : u \in \W^{1 , p} (\Omega) \}.
\end{align*}
Because $\L^2_{\sigma} (\Omega)$ is a closed subspace of $\L^2(\Omega ; \IC^d)$ it is clear that the orthogonal projection $\IP_2$ from $\L^2(\Omega ; \IC^d)$ onto $\L^2_{\sigma} (\Omega)$ exists and is bounded. This projection is called Helmholtz projection. The boundedness of the Helmholtz projection on $\L^p$-spaces for $p$ in an open interval about two is a well-known result of Fabes, Mendez, and Mitrea~\cite[Thm.~11.1]{Fabes_Mendez_Mitrea} and is stated in the following theorem.

\begin{theorem}[Fabes, Mendez \& Mitrea]
\label{Thm: Fabes_Mendez_Mitrea}
Let $\Omega \subset \IR^d$ be a bounded Lipschitz domain. There exists $\eps > 0$ such that for all $3 / 2 - \eps < p < 3 + \eps$ the Helmholtz projection restricts/extends to a bounded projection $\IP_p$ on $\L^p(\Omega ; \IC^d)$ with range $\L^p_{\sigma} (\Omega)$. Moreover, the range of $\Id - \IP_p$ is given by $\nabla \W^{1 , p} (\Omega)$.
\end{theorem}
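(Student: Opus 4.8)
The plan is to reformulate $\NSE$ as an abstract semilinear Cauchy problem on $\L^p_\sigma(\Omega)$ and to solve it by a fixed-point argument whose linear part is furnished by the maximal $\L^q$-regularity of the Stokes operator and whose nonlinear part is controlled by space-time embeddings of the maximal-regularity space. Applying the Helmholtz projection $\IP_p$ of Theorem~\ref{Thm: Fabes_Mendez_Mitrea} to the momentum equation removes the pressure and produces
\begin{align*}
 u'(t) + A_p u(t) = \IP_p f(t) - \IP_p (u(t) \cdot \nabla) u(t), \qquad u(0) = a .
\end{align*}
Write $\EE := \W^{1 , q}(0 , \infty ; \L^p_{\sigma} (\Omega)) \cap \L^q(0 , \infty ; \dom(A_p))$ for the asserted solution space. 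Because $A_p$ enjoys maximal $\L^q$-regularity (Kunstmann and Weis), the linear problem $u' + A_p u = g$, $u(0) = a$ admits, for every $g \in \L^q(0 , \infty ; \L^p_\sigma (\Omega))$ and every $a$ in the trace space $(\L^p_\sigma (\Omega) , \dom(A_p))_{1 - 1 / q , q}$, a unique solution in $\EE$ together with an estimate $\|u\|_{\EE} \leq M (\|a\| + \|g\|)$ in which $M$ does not depend on the infinite time horizon. This linear solvability is the first input and fixes the role of the interpolation space as the natural space of initial data.

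Denote by $g \mapsto \Lop g$ and $a \mapsto \S a$ the solution operators of the inhomogeneous and the homogeneous linear problems, so that a fixed point in $\EE$ of
\begin{align*}
 \Phi(u) := \S a + \Lop (\IP_p f) - \Lop \big( \IP_p (u \cdot \nabla) u \big)
\end{align*}
is precisely a strong solution of the projected equation, the pressure being recovered afterwards from the equation and Theorem~\ref{Thm: Fabes_Mendez_Mitrea}. The map $B(u , v) := - \Lop ( \IP_p (u \cdot \nabla) v )$ is bilinear, and by the linear estimate the whole argument reduces to the bilinear bound $\| \IP_p (u \cdot \nabla) v \|_{\L^q(0 , \infty ; \L^p_\sigma (\Omega))} \leq \eta \|u\|_{\EE} \|v\|_{\EE}$. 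Once this is available, the standard quadratic fixed-point lemma yields a locally unique small fixed point as soon as $\| \S a + \Lop(\IP_p f) \|_{\EE}$ is below a threshold depending only on $\eta$; by the linear estimate this is guaranteed by smallness of $\|a\|_{(\L^p_\sigma (\Omega) , \dom(A_p))_{1 - 1 / q , q}} + \|f\|_{\L^q(0 , \infty ; \L^p(\Omega ; \IC^3))}$, which produces the constant $C$ of the statement.

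The crux is therefore the bilinear estimate. Pointwise in time I would use H\"older in space, $\|(u \cdot \nabla) v\|_{\L^p} \leq \|u\|_{\L^{r_1}} \|\nabla v\|_{\L^{r_2}}$ with $1/p = 1/r_1 + 1/r_2$, and then H\"older in time with $1/q = 1/s_1 + 1/s_2$; the two factors are estimated by mixed space-time embeddings of $\EE$. Since on a bounded Lipschitz domain no $\W^{2 , p}$-regularity is available, $\dom(A_p)$ cannot be identified with a Sobolev space and every spatial derivative must be routed through the fractional powers of $A_p$. Here Theorem~\ref{Thm: Square root} (equivalently, the boundedness of $\nabla A_p^{- 1 / 2}$) trades $\nabla v$ for $A_p^{1 / 2} v$, and interpolation of $\dom(A_p)$ with $\L^p_\sigma (\Omega)$ combined with the embedding $\dom(A_p^{1 / 2}) = \W^{1 , p}_{0 , \sigma} (\Omega) \hookrightarrow \L^{p^\ast}$, $1 / p^\ast = 1/p - 1/d$, gives the spatial embeddings $\dom(A_p^\alpha) \hookrightarrow \L^r$ with $1/r = 1/p - 2 \alpha / d$, valid however only in the range $0 \leq \alpha \leq 1 / 2$. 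The temporal integrability of $A_p^\alpha u$ is supplied by the mixed-derivative theorem applied to $\EE$, which converts one unit of time regularity into fractional powers and yields bounds of the form $A_p^\alpha u \in \L^{s(\alpha)}(0 , \infty ; \L^{r(\alpha)}(\Omega))$. Matching the spatial relation $1/p = 1/r_1 + 1/r_2$ and the temporal relation $1/q = 1/s_1 + 1/s_2$ under the cap $\alpha \leq 1 / 2$ is exactly what forces the admissibility condition $\frac{2 (p + 1)}{3} < q$, the endpoint $q \geq 2$ being reachable at $p = 2$ where the $\L^2$-energy scaling is available.

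I expect the main obstacle to be precisely this embedding bookkeeping. The absence of second-order elliptic regularity on Lipschitz domains means that $\dom(A_p)$ is accessible only through $A_p$ itself, so the full ladder of space-time embeddings must be assembled from the square-root identity of Theorem~\ref{Thm: Square root}, the interpolation scale of fractional powers capped at $1/2$, and the $\L^p$-$\L^q$ smoothing of Corollary~\ref{Cor: Lp-Lq}, rather than from classical Sobolev embeddings. The delicate points are to verify that all intermediate indices $r_1 , r_2$ remain inside the interval $\lvert 1/r - 1/2 \rvert < 1 / (2 d) + \eps$ on which the Stokes operator and the square-root characterization are known, and that the exponent arithmetic closes exactly at the stated threshold for $q$; once these are in hand, the contraction mechanism and the recovery of the pressure are routine.
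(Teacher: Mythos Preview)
Your proposal does not address the stated theorem. Theorem~\ref{Thm: Fabes_Mendez_Mitrea} is the Fabes--Mendez--Mitrea result on the $\L^p$-boundedness of the Helmholtz projection for $3/2 - \eps < p < 3 + \eps$; the paper does not prove it but merely quotes it from~\cite{Fabes_Mendez_Mitrea}. What you have written is instead a proof sketch for Theorem~\ref{Thm: Solutions via maximal regularity} (equivalently Proposition~\ref{Prop: Existence of the fixed point}), the existence of strong solutions to $\NSE$ via maximal $\L^q$-regularity. Nothing in your argument touches the actual content of the stated theorem --- the construction of $\IP_p$ and the Helmholtz decomposition on a Lipschitz domain --- and indeed your argument \emph{uses} Theorem~\ref{Thm: Fabes_Mendez_Mitrea} as an input rather than establishing it.

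If your target was in fact Theorem~\ref{Thm: Solutions via maximal regularity}, then your overall scheme (project, invoke maximal regularity for the linear part, close a bilinear estimate, contract) coincides with the paper's. The mechanism for the bilinear bound is genuinely different, however. You propose to route everything through fractional powers $A_p^{\alpha}$, the identification $\dom(A_p^{1/2}) = \W^{1,p}_{0,\sigma}(\Omega)$, and Sobolev embeddings of $\dom(A_p^{\alpha})$, worrying that the intermediate exponents $r_1 , r_2$ stay in the admissible interval. The paper sidesteps this entirely: it uses the Brown--Shen inequality $\|v\|_{\L^\infty} \leq C \|\nabla v\|_{\L^2}^{1/2} \|A_2 v\|_{\L^2}^{1/2}$ (Theorem~\ref{Thm: Brown_Shen}) to estimate $\|(v \cdot \nabla) w\|_{\L^p} \leq \|v\|_{\L^\infty} \|\nabla w\|_{\L^p}$ pointwise in time, and then controls the resulting time integrals via the embedding $\IE \subset \L^{3q}(0,\infty; \W^{1,p})$ obtained from the mixed-derivative theorem together with the Bessel-potential regularity of $\dom(A_p)$ (Theorem~\ref{Thm: Embedding domains}, Proposition~\ref{Prop: Mixed derivative theorem}). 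Only the exponents $p$ and $2$ occur spatially, so the admissible-interval issue never arises; this is what makes the threshold $q > 2(p+1)/3$ drop out cleanly.
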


The Stokes operator $A_2$ on an open and bounded set $\Omega \subset \IR^d$ is defined by means of Kato's form method as the $\L^2_{\sigma} (\Omega)$-realization of the sesquilinear form
\begin{align*}
 \fa : \W^{1 , 2}_{0 , \sigma} (\Omega) \times \W^{1 , 2}_{0 , \sigma} (\Omega) \to \IC , \quad (u , v) \mapsto \int_{\Omega} \nabla u \cdot \overline{\nabla v} \; \d x.
\end{align*}
By symmetry and coercivity of the form it is clear that $A_2$ is self-adjoint and that $- A_2$ generates a bounded analytic semigroup. This semigroup is denoted by $\e^{- t A_2}$ and is called the Stokes semigroup. For a clear discussion of the facts above, see~\cite[Sec.~4]{Mitrea_Monniaux}. In particular, Mitrea and Monniaux give in~\cite[Thm.~4.7]{Mitrea_Monniaux} the following convenient characterization of the Stokes operator.

\begin{theorem}[Mitrea \& Monniaux]
\label{Thm: Characterization Stokes}
If $\Omega \subset \IR^d$ is a bounded Lipschitz domain, then the Stokes operator on $\L^2_{\sigma} (\Omega)$ is characterized by
\begin{align*}
 \dom(A_2) &= \{ u \in \W^{1 , 2}_{0 , \sigma} (\Omega) : \exists \pi \in \L^2(\Omega) \text{ such that } - \Delta u + \nabla \pi \in \L^2_{\sigma} (\Omega) \} \\
 A_2 u &= - \Delta u + \nabla \pi.
\end{align*}
\end{theorem}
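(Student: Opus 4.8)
\emph{Proof strategy.} The plan is to show that the abstract domain of $A_2$ produced by Kato's form method coincides with the concrete description, the only genuinely nontrivial ingredient being a pressure-recovery argument of de Rham/Ne\v{c}as type that is sensitive to the Lipschitz geometry. I first recall what the form method delivers: since $A_2$ is the $\L^2_{\sigma}(\Omega)$-realization of $\fa$, an element $u$ lies in $\dom(A_2)$ precisely when $u \in \W^{1,2}_{0,\sigma}(\Omega)$ and there exists $f \in \L^2_{\sigma}(\Omega)$ with
\begin{align*}
 \fa(u , v) = \int_{\Omega} f \cdot \overline{v} \, \d x \qquad (v \in \W^{1,2}_{0,\sigma}(\Omega)),
\end{align*}
and in that case $A_2 u = f$. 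Both inclusions therefore reduce to matching this variational identity with the equation $- \Delta u + \nabla \pi = f$.

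For the inclusion ``$\supseteq$'', suppose $u \in \W^{1,2}_{0,\sigma}(\Omega)$ admits $\pi \in \L^2(\Omega)$ with $f := - \Delta u + \nabla \pi \in \L^2_{\sigma}(\Omega)$. Testing against an arbitrary $v \in \W^{1,2}_{0,\sigma}(\Omega)$, integration by parts yields $\langle - \Delta u , v \rangle = \fa(u,v)$, whereas the pressure term drops out: because $v$ has vanishing trace and $\divergence(v) = 0$, one computes $\langle \nabla \pi , v \rangle = - \int_{\Omega} \pi \, \overline{\divergence(v)} \, \d x = 0$. Hence $\fa(u,v) = \int_{\Omega} f \cdot \overline{v} \, \d x$ for all such $v$, so $u \in \dom(A_2)$ with $A_2 u = f = - \Delta u + \nabla \pi$.

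For the reverse inclusion ``$\subseteq$'', take $u \in \dom(A_2)$ and set $f := A_2 u \in \L^2_{\sigma}(\Omega)$. Viewing $- \Delta u - f$ as an element of the dual space $\W^{-1,2}(\Omega ; \IC^d) := (\W^{1,2}_0(\Omega ; \IC^d))^*$, the form identity gives $\langle - \Delta u - f , v \rangle = \fa(u,v) - \int_{\Omega} f \cdot \overline{v} \, \d x = 0$ first for all $v \in \C^{\infty}_{c , \sigma}(\Omega)$ and then, by density, for all $v \in \W^{1,2}_{0,\sigma}(\Omega)$. At this point I invoke the classical fact that, on a bounded Lipschitz domain, the annihilator of $\W^{1,2}_{0,\sigma}(\Omega)$ inside $\W^{-1,2}(\Omega ; \IC^d)$ is exactly $\{ \nabla \pi : \pi \in \L^2(\Omega) \}$. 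The inclusion $\nabla \L^2 \subseteq$ annihilator is the integration-by-parts computation of the previous paragraph; the converse — that every annihilating functional is the gradient of an honest $\L^2$ function rather than merely a distribution — is de Rham's lemma combined with Ne\v{c}as's inequality $\| \pi - \overline{\pi} \|_{\L^2(\Omega)} \leq C \| \nabla \pi \|_{\W^{-1,2}(\Omega)}$. Applying this to $- \Delta u - f$ produces $\pi \in \L^2(\Omega)$ with $- \Delta u - f = - \nabla \pi$, that is, $- \Delta u + \nabla \pi = f \in \L^2_{\sigma}(\Omega)$, which is the asserted representation.

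The main obstacle is precisely this last pressure-recovery step, namely obtaining $\pi$ as a genuine $\L^2$ function. On smooth domains it is routine, but on a Lipschitz domain it rests on the validity of Ne\v{c}as's estimate — equivalently, the surjectivity of $\divergence \colon \W^{1,2}_0(\Omega ; \IC^d) \to \{ g \in \L^2(\Omega) : \int_{\Omega} g \, \d x = 0 \}$ furnished by a Bogovski\u{\i}-type operator — which is the single point where the Lipschitz regularity of $\partial \Omega$ is essential.
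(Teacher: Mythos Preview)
The paper does not actually prove this statement: it is quoted as Theorem~\ref{Thm: Characterization Stokes} with an attribution to Mitrea and Monniaux~\cite[Thm.~4.7]{Mitrea_Monniaux} and no argument given. Your proof is correct and is the standard one --- match the variational identity from the form method against the distributional equation, with the only nontrivial step being the recovery of an $\L^2$ pressure via de Rham together with Ne\v{c}as's inequality (equivalently, the Bogovski\u{\i} right inverse of the divergence), which is indeed available on bounded Lipschitz domains. This is also essentially how the cited reference proceeds, so there is nothing to contrast.
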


Note that in the theorem above, ``$- \Delta u + \nabla \pi$'' has to be understood in the sense of distributions. \par
To define the Stokes operator on the spaces $\L^p_{\sigma} (\Omega)$, one distinguishes the cases $p> 2$ and $p < 2$. If $p > 2$, then the Stokes operator $A_p$ on $\L^p_{\sigma} (\Omega)$ is defined as the part of $A_2$ in $\L^p_{\sigma} (\Omega)$, i.e.,
\begin{align*}
 \dom(A_p) := \{ u \in \dom(A_2) : A_2 u \in \L^p_{\sigma} (\Omega) \}, \quad A_p u := A_2 u.
\end{align*}
If $p < 2$ and if $A_2$ is closable in $\L^p_{\sigma} (\Omega)$, then the Stokes operator $A_p$ is defined as the closure of $A_2$ in $\L^p_{\sigma} (\Omega)$, i.e.,
\begin{align*}
 \dom(A_p) &:= \{ u \in \L^p_{\sigma} (\Omega) : \exists f \in \L^p_{\sigma} (\Omega) ,~ \exists (u_n)_{n \in \IN} \subset \dom(A_2) \text{ such that } u_n \to u \text{ and } A_2 u_n \to f \text{ in } \L^p\} \\
 A_p u &:= f.
\end{align*}

\begin{remark}
If $2 < p < \infty$ is such that the Helmholtz projection is bounded on $\L^p (\Omega ; \IC^d)$, then~\cite[Prop.~5.2.16]{Tolksdorf} implies that $A_p$ is densely defined if and only if $A_2$ is closable in $\L^{p^{\prime}}_{\sigma} (\Omega)$, where $1 / p + 1 / p^{\prime} = 1$. If this applies, then $A_p^* = A_{p^{\prime}}$, where $A_p^*$ denotes the dual operator to $A_p$. 
\end{remark}

In 2012, Shen proved in his seminal paper~\cite{Shen-Stokes} the following result.

\begin{theorem}[Shen]
\label{Thm: Shen}
Let $\Omega \subset \IR^d$ be a bounded Lipschitz domain. Then there exists $\eps > 0$ depending only on $d$ and the Lipschitz character of $\Omega$ such that for all
\begin{align*}
 \Big\lvert \frac{1}{p} - \frac{1}{2} \Big\rvert < \frac{1}{2 d} + \eps 
\end{align*}
$A_p$ is sectorial of angle $0$. In particular, $- A_p$ generates a bounded analytic semigroup with $0 \in \rho(A_p)$ and $A_p$ is closed and densely defined.
\end{theorem}

Let us quantify Shen's statement that the Stokes operator is densely defined. For this purpose, define for $1 < p < \infty$ the space $\W^{2 , p}_{0 , \sigma} (\Omega)$ as the closure of $\C_{c , \sigma}^{\infty} (\Omega)$ in $\W^{2 , p} (\Omega ; \IC^d)$. Then, the following lemma is valid.

\begin{lemma}
\label{Lem: Embedding}
Let $\Omega \subset \IR^d$ be a bounded Lipschitz domain. Then there exists $\eps > 0$ such that for all
\begin{align*}
 \Big\lvert \frac{1}{p} - \frac{1}{2} \Big\rvert < \frac{1}{2 d} + \eps ,
\end{align*}
the space $\W^{2 , p}_{0 , \sigma} (\Omega)$ is embedded continuously into $\dom(A_p)$. In particular, the representation formula
\begin{align}
\label{Eq: Stokes for smooth functions}
 A_p u = - \IP_p \Delta u \qquad (u \in \W^{2 , p}_{0 , \sigma} (\Omega))
\end{align}
is valid.
\end{lemma}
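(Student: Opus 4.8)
The plan is to reduce the whole statement to the single identity $A_2 u = - \IP_p \Delta u$ for smooth solenoidal $u$ and then to propagate it to all of $\W^{2 , p}_{0 , \sigma}(\Omega)$ by a density argument, treating the cases $p \leq 2$ and $p > 2$ separately because the operator $A_p$ is defined differently there. First I would shrink $\eps$ so that the interval~\eqref{Eq: Admissible interval of p's} is contained both in the range of Theorem~\ref{Thm: Shen} (so that $A_p$ is a well-defined, closed, densely defined operator) and in the range of Theorem~\ref{Thm: Fabes_Mendez_Mitrea} (so that $\IP_p$ is a bounded projection on $\L^p(\Omega ; \IC^d)$); as $\tfrac{1}{2d} \leq \tfrac{1}{6}$ for $d \geq 3$, such an $\eps$ exists. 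Next I would establish the base identity: for $u \in \C_{c , \sigma}^{\infty}(\Omega)$ one has $-\Delta u \in \C_c^{\infty}(\Omega ; \IC^d) \subset \L^2(\Omega ; \IC^d)$, and writing $-\Delta u = \IP_2(-\Delta u) + \nabla \pi$ by Theorem~\ref{Thm: Fabes_Mendez_Mitrea}, the characterization in Theorem~\ref{Thm: Characterization Stokes} gives $u \in \dom(A_2)$ with $A_2 u = -\IP_2 \Delta u$. Since $\Delta u$ lies in $\L^2(\Omega ; \IC^d) \cap \L^p(\Omega ; \IC^d)$ and $\IP_p$ restricts/extends $\IP_2$, both projections agree on it, so $A_2 u = -\IP_2 \Delta u = -\IP_p \Delta u$ for every $u \in \C_{c , \sigma}^{\infty}(\Omega)$.

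For $p \leq 2$ I would use the closure definition of $A_p$. Given $u \in \W^{2 , p}_{0 , \sigma}(\Omega)$, choose $u_n \in \C_{c , \sigma}^{\infty}(\Omega) \subset \dom(A_2)$ with $u_n \to u$ in $\W^{2 , p}(\Omega ; \IC^d)$. Then $u_n \to u$ in $\L^p_{\sigma}(\Omega)$, while $A_2 u_n = -\IP_p \Delta u_n \to -\IP_p \Delta u$ in $\L^p$ by boundedness of $\IP_p$ together with $\Delta u_n \to \Delta u$. The definition of $A_p$ as the closure of $A_2$ then forces $u \in \dom(A_p)$ and $A_p u = -\IP_p \Delta u$.

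For $p > 2$ I would use the part-of-$A_2$ definition. Since $\Omega$ is bounded, $\L^p \hookrightarrow \L^2$, hence $\W^{2 , p}_{0 , \sigma}(\Omega) \hookrightarrow \W^{2 , 2}_{0 , \sigma}(\Omega)$; approximating $u$ by smooth $u_n$ in $\W^{2 , 2}$ and using the base identity together with the closedness of $A_2$ yields $u \in \dom(A_2)$ with $A_2 u = -\IP_2 \Delta u$. Because $\Delta u \in \L^p(\Omega ; \IC^d)$ and $\IP_2 \Delta u = \IP_p \Delta u \in \L^p_{\sigma}(\Omega)$, we obtain $A_2 u \in \L^p_{\sigma}(\Omega)$, so $u \in \dom(A_p)$ with $A_p u = -\IP_p \Delta u$, which is~\eqref{Eq: Stokes for smooth functions}. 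In both cases boundedness of $\IP_p$ gives $\|u\|_{\L^p} + \|A_p u\|_{\L^p} \leq \|u\|_{\L^p} + C \|\Delta u\|_{\L^p} \leq C' \|u\|_{\W^{2 , p}}$, i.e.\ the continuous embedding. The only genuinely delicate points are the consistency of the two Helmholtz projections and the correct bookkeeping between the closure definition and the part definition of $A_p$; once the base identity is in hand, the rest is a routine limiting argument.
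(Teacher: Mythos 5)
Your proposal is correct and follows essentially the same route as the paper: both arguments rest on the Helmholtz decomposition of $\Delta u$ from Theorem~\ref{Thm: Fabes_Mendez_Mitrea}, the characterization of $\dom(A_2)$ in Theorem~\ref{Thm: Characterization Stokes}, and then the part-of-$A_2$ definition for $p>2$ versus a density argument through the closure definition for $p<2$. The only cosmetic difference is that for $p>2$ the paper applies the characterization theorem directly to $u \in \W^{2,p}_{0,\sigma}(\Omega)$, whereas you first prove the identity on $\C^{\infty}_{c,\sigma}(\Omega)$ and pass to the limit via closedness of $A_2$; both steps are valid and use the same ingredients.
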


\begin{proof}
Let $u \in \W^{2 , p}_{0 , \sigma} (\Omega)$. We distinguish the cases $p \geq 2$ and $p < 2$. If $p \geq 2$, then, by virtue of Theorem~\ref{Thm: Fabes_Mendez_Mitrea}, there exists $g \in \W^{1 , p} (\Omega)$ such that $(\Id - \IP_p) \Delta u = \nabla g$. Consequently,
\begin{align*}
 - \Delta u = - \IP_p \Delta u - (\Id - \IP_p) \Delta u = - \IP_p \Delta u - \nabla g.
\end{align*}
Since $\Omega$ is bounded, Theorem~\ref{Thm: Characterization Stokes} gives $u \in \dom(A_2)$. The definition of $\dom(A_p)$ then delivers $u \in \dom(A_p)$. In particular,~\eqref{Eq: Stokes for smooth functions} is valid. \par
If $p < 2$, let $(u_n)_{n \in \IN} \subset \C_{c , \sigma}^{\infty} (\Omega)$ be an appropriate sequence that approximates $u$ in $\W^{2 , p} (\Omega ; \IC^d)$. Since $\IP_2$ extends to a bounded operator on $\L^p (\Omega ; \IC^d)$ by Theorem~\ref{Thm: Fabes_Mendez_Mitrea}, the representation formula~\eqref{Eq: Stokes for smooth functions} for $A_2$ shows that $(A_2 u_n)_{n \in \IN}$ is a Cauchy sequence in $\L^p_{\sigma} (\Omega)$. Since $A_p$ is the closure of $A_2$ in $\L^p_{\sigma} (\Omega)$ this proves $u \in \dom(A_p)$ together with~\eqref{Eq: Stokes for smooth functions}. The continuous embedding follows by the boundedness of $\IP_p$ and the validity of~\eqref{Eq: Stokes for smooth functions}.
\end{proof}

We close this section by mentioning some functional analytic facts of the Stokes operator on $\L^p_{\sigma} (\Omega)$. We start by introducing the notion of maximal regularity. \par
For $1 < p , q < \infty$ define the maximal regularity space $\IE$ with corresponding data space $\IF$ by
\begin{align}
\label{Eq: Spaces of maximal regularity}
 \IE := \W^{1 , q} (0 , \infty ; \L^p_{\sigma} (\Omega)) \cap \L^q(0 , \infty ; \dom(A_p)), \quad \IF := \L^q(0 , \infty ; \L^p_{\sigma} (\Omega)) \times (\L^p_{\sigma} (\Omega) , \dom(A_p))_{1 - 1 / q , q}
\end{align}
endowed with the canonical norms. Here, $(\cdot , \cdot)_{1 - 1 / q , q}$ denotes the real interpolation functor. We say that the Stokes operator has \textit{maximal $\L^q$-regularity} if the operator $\partial_t + A_p$ is an isomorphism between $\IE$ and $\IF$. The following theorem concerns the maximal $\L^q$-regularity of the Stokes operator on a bounded Lipschitz domain and can be found in~\cite[Prop.~13]{Kunstmann_Weis} or~\cite[Thm.~5.2.24]{Tolksdorf}.

\begin{theorem}[Kunstmann \& Weis]
\label{Thm: Maximal regularity}
Let $\Omega \subset \IR^d$ be a bounded Lipschitz domain and $1 < q < \infty$. Then there exists $\eps > 0$ depending only on $d$ and the Lipschitz character of $\Omega$ such that for all
\begin{align*}
 \Big\lvert \frac{1}{p} - \frac{1}{2} \Big\rvert < \frac{1}{2 d} + \eps 
\end{align*}
the Stokes operator $A_p$ has maximal $\L^q$-regularity.
\end{theorem}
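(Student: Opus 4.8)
The plan is to deduce maximal $\L^q$-regularity from a spectral property of $A_p$ through the operator-theoretic machinery available on Banach spaces of class UMD. The relevant tool is Weis's characterisation: if $A$ is a densely defined sectorial operator with $0 \in \rho(A)$ on a UMD space $X$, then $\partial_t + A$ is an isomorphism from $\W^{1,q}(0,\infty;X) \cap \L^q(0,\infty;\dom(A))$ onto $\L^q(0,\infty;X) \times (X,\dom(A))_{1-1/q,q}$ for every $1 < q < \infty$ if and only if $A$ is $\mathcal{R}$-sectorial of angle strictly less than $\pi/2$. Since Theorem~\ref{Thm: Shen} already furnishes $0 \in \rho(A_p)$ together with sectoriality of angle $0$, and since the spaces $\IE$, $\IF$ of~\eqref{Eq: Spaces of maximal regularity} are precisely those appearing in Weis's theorem with $X = \L^p_\sigma(\Omega)$, the whole statement reduces to verifying that $A_p$ is $\mathcal{R}$-sectorial of angle below $\pi/2$.

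First I would record two geometric properties of the underlying space. By Theorem~\ref{Thm: Fabes_Mendez_Mitrea} the space $\L^p_\sigma(\Omega)$ is the range of the bounded projection $\IP_p$, hence a complemented, in particular closed, subspace of $\L^p(\Omega;\IC^d)$. As $1 < p < \infty$, the space $\L^p(\Omega;\IC^d)$ is UMD and enjoys Pisier's property $(\alpha)$; both properties pass to closed subspaces, the defining randomised inequalities involving only vectors of the subspace and their ambient norm. Thus $\L^p_\sigma(\Omega)$ is a UMD space with property $(\alpha)$, which is exactly what is needed to bridge between a scalar bounded functional calculus and an $\mathcal{R}$-bounded one.

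Next I would invoke the key analytic input: the boundedness of the $\H^\infty$-calculus of $A_p$ on $\L^p_\sigma(\Omega)$ with angle strictly below $\pi/2$ (in fact of arbitrarily small positive angle), which is the substance of~\cite{Kunstmann_Weis}. On a space with property $(\alpha)$, the Kalton–Weis theorem upgrades a bounded $\H^\infty(\omega)$-calculus to an $\mathcal{R}$-bounded one of the same angle $\omega$; in particular the family $\{\lambda(\lambda + A_p)^{-1}\}$ over a sector of half-angle exceeding $\omega$ is $\mathcal{R}$-bounded, i.e. $A_p$ is $\mathcal{R}$-sectorial of angle $\omega < \pi/2$. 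Feeding this into Weis's theorem yields maximal $\L^q$-regularity for every $1 < q < \infty$, and hence the asserted isomorphism $\partial_t + A_p \colon \IE \to \IF$; the emergence of the real interpolation space as the trace space of $\IE$ is the standard trace theorem for maximal-regularity spaces and uses $0 \in \rho(A_p)$ to treat the half-line.

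The genuinely hard part is the analytic input cited above, namely the bounded $\H^\infty$-calculus (equivalently, the $\mathcal{R}$-sectoriality) of the Stokes operator on a mere Lipschitz domain. Here one has neither boundary smoothness nor classical elliptic regularity at one's disposal, and the available control is confined to the restricted range of $p$ coming from Shen's resolvent bounds; establishing the square-function and $\mathcal{R}$-boundedness estimates for the resolvent on this limited scale of $\L^p_\sigma(\Omega)$ — via Shen's real-variable reduction to weak reverse-Hölder estimates — is the decisive obstacle. Once it is granted, the passage to maximal regularity sketched above is soft functional analysis.
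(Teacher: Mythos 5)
The paper offers no proof of this statement: Theorem~\ref{Thm: Maximal regularity} is quoted as an external result, with a pointer to \cite[Prop.~13]{Kunstmann_Weis} and \cite[Thm.~5.2.24]{Tolksdorf}, so there is no internal argument to compare yours against. That said, your derivation is a correct way to obtain the theorem from the other ingredients quoted in Section~\ref{Sec: Notation}: $\L^p_{\sigma}(\Omega)$ is complemented in $\L^p(\Omega;\IC^d)$ by Theorem~\ref{Thm: Fabes_Mendez_Mitrea}, hence is a UMD space with Pisier's property $(\alpha)$; the Kalton--Weis theorem upgrades the bounded $\H^{\infty}$-calculus to $\cR$-sectoriality; and Weis's characterisation on UMD spaces then yields maximal $\L^q$-regularity for all $1<q<\infty$, with $0\in\rho(A_p)$ from Theorem~\ref{Thm: Shen} taking care of the infinite time horizon and of the identification of the trace space in $\IF$. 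Two caveats are worth recording. First, your chain reverses the logic of the cited source: Kunstmann and Weis prove the $\cR$-bounded resolvent estimates \emph{directly}, by an $\cR$-bounded version of Shen's weak reverse H\"older extrapolation, and deduce maximal regularity from Weis's theorem before, and independently of, the $\H^{\infty}$-calculus; treating the calculus as the black-box input is legitimate here only because the paper quotes that theorem as well, but the ``genuinely hard part'' in the reference is the $\cR$-sectoriality itself rather than the calculus. Second, you need the $\H^{\infty}$-angle to be strictly less than $\pi/2$; the version of the calculus theorem reproduced in this paper records no angle, and sectoriality of angle $0$ does not by itself bound $\omega_{\H^{\infty}}(A_p)$ from above, so the angle statement must be imported from \cite{Kunstmann_Weis} explicitly.
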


Since $A_p$ is injective and sectorial of angle $0$, one can follow the construction in Haase~\cite[Ch.~2]{Haase} to assign a linear and closed operator $f(A_p)$ to each holomorphic function $f : \Sec_{\theta} \to \IC$ exhibiting at most polynomial growth at $0$ and infinity, where for $\theta \in (0 , \pi)$
\begin{align*}
 \Sec_{\theta} := \{ z \in \IC \setminus \{0\} : \lvert \arg(z) \rvert < \theta \}.
\end{align*}
In particular, one can assign an operator to functions $f \in \H^{\infty} (\Sec_{\theta})$, which is the algebra of bounded and holomorphic functions on $\Sec_{\theta}$, and to functions of the form $\Sec_{\theta} \ni z \mapsto z^{\alpha}$ for each $\alpha \in \IC$. The latter type of functions lead to fractional powers $A_p^{\alpha}$ of the Stokes operator. If for each $f \in \H^{\infty} (\Sec_{\theta})$ the operator $f(A_p)$ is bounded and if one has the estimate
\begin{align*}
 \| f(A_p) \|_{\Lop(\L^p_{\sigma} (\Omega))} \leq C \| f \|_{\L^{\infty} (\Sec_{\theta})} \qquad (f \in \H^{\infty} (\Sec_{\theta})),
\end{align*}
then one says that the $\H^{\infty}$-calculus of $A_p$ is bounded. This is exactly what Kunstmann and Weis proved in~\cite[Thm.~16]{Kunstmann_Weis}.

\begin{theorem}[Kunstmann \& Weis]
Let $\Omega \subset \IR^d$ be a bounded Lipschitz domain. Then there exists $\eps > 0$ depending only on $d$ and the Lipschitz character of $\Omega$ such that for all
\begin{align*}
 \Big\lvert \frac{1}{p} - \frac{1}{2} \Big\rvert < \frac{1}{2 d} + \eps 
\end{align*}
the $\H^{\infty}$-calculus of the Stokes operator $A_p$ is bounded.
\end{theorem}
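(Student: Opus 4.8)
Since this theorem is cited verbatim from Kunstmann and Weis, I only outline the strategy one would follow to establish it. The plan is to bootstrap from the Hilbert space case $p = 2$, where the situation is completely favorable, to the full range dictated by Shen's theorem, using off-diagonal estimates in place of the pointwise kernel bounds that are unavailable on a general Lipschitz domain.

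First I would dispose of the base case $p = 2$. Because $A_2$ is defined through the symmetric and coercive form $\fa$, it is self-adjoint and strictly positive (recall $0 \in \rho(A_2)$), so the spectral theorem / McIntosh's theorem yields a bounded $\H^{\infty}(\Sec_{\theta})$-calculus on $\L^2_{\sigma}(\Omega)$ for every $\theta \in (0 , \pi)$. Equivalently, fixing a nonzero $\psi \in \H^{\infty}(\Sec_{\theta})$ that decays at $0$ and $\infty$, one has the two-sided square function estimate
\begin{align*}
 \Big\| \Big( \int_0^{\infty} \abs{\psi(t A_2) f}^2 \, \frac{\d t}{t} \Big)^{1/2} \Big\|_{\L^2} \approx \| f \|_{\L^2_{\sigma}(\Omega)} .
\end{align*}
By the theorem of Cowling, Doust, McIntosh, and Yagi (and Le Merdy), together with the fact that $\L^p_{\sigma}(\Omega)$ is a closed subspace of the UMD space $\L^p(\Omega ; \IC^d)$, proving the bounded $\H^{\infty}$-calculus of $A_p$ reduces to the analogous two-sided square function estimate on $\L^p_{\sigma}(\Omega)$.

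Next I would propagate the square function estimate from $p = 2$ to the range $\lvert 1/p - 1/2 \rvert < 1/(2d) + \eps$ by a Calderón--Zygmund-type extrapolation on the space of homogeneous type $\Omega$. The upper square function bound in $\L^p$ follows once $\psi(t A)$ is controlled by $\L^2$ data carrying off-diagonal decay, so I would establish \emph{generalized Gaussian estimates} for the Stokes semigroup: for some exponent $p_0$ with $1/p_0 - 1/2 < 1/(2d) + \eps$ and all Borel sets $E , F \subset \Omega$,
\begin{align*}
 \| \ind_E \, \e^{- t A} \, \ind_F \|_{\Lop(\L^{p_0}_{\sigma}(\Omega) , \L^{p_0'}_{\sigma}(\Omega))} \leq C \, t^{- \frac{d}{2}(\frac{1}{p_0} - \frac{1}{p_0'})} \exp\Big( - c \, \frac{\dist(E , F)^2}{t} \Big) .
\end{align*}
Fed into the Blunck--Kunstmann machinery relating off-diagonal bounds to the $\H^{\infty}$-calculus, such estimates deliver the upper square function estimate on $\L^p_{\sigma}$ for $p$ in the stated interval; the lower estimate then follows by duality, since the interval is symmetric about $2$ and $A_p^* = A_{p'}$.

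The main obstacle is the third step, the derivation of the off-diagonal estimates. Unlike the Dirichlet Laplacian, the Stokes semigroup admits no pointwise Gaussian kernel bound on a general Lipschitz domain, because the Helmholtz projection $\IP_p$ is nonlocal and the pressure couples distant regions of $\Omega$, so classical heat kernel techniques are unavailable. Instead I would return to Shen's $\L^p$ resolvent estimates for the Stokes system, which encode a weak reverse Hölder / Meyers-type self-improvement of integrability, and combine them with a Caccioppoli-type energy argument for the resolvent problem $(\lambda + A_2) u = f$ with data supported in $F$ and tested on $E$, which produces the exponential decay in $\dist(E , F)$. Tracking the admissible integrability exponent through this argument is precisely what pins the range to $\lvert 1/p - 1/2 \rvert < 1/(2d) + \eps$; it is also where the standard form of the generalized Gaussian estimate may fail and must be replaced by a weaker criterion tailored to what Shen's estimates actually yield, which is the genuinely hard, domain-dependent heart of the matter, everything else being abstract functional-analytic machinery.
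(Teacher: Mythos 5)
The paper offers no proof of this statement: it is imported verbatim from Kunstmann and Weis~\cite{Kunstmann_Weis}, Theorem~16, so there is nothing internal to compare your sketch against. Your outline --- McIntosh's theorem for the self-adjoint operator $A_2$ at $p = 2$, reduction of the bounded $\H^{\infty}$-calculus on the UMD-subspace $\L^p_{\sigma}(\Omega)$ to two-sided square function estimates, and extrapolation to $\lvert 1/p - 1/2 \rvert < 1/(2d) + \eps$ via off-diagonal bounds extracted from Shen's resolvent estimates in place of unavailable Gaussian kernel bounds --- is a faithful high-level description of the strategy of that reference, including your correct caveat that the standard generalized Gaussian estimates must be replaced by the weaker ``new criteria'' of its title.
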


The boundedness of the $\H^{\infty}$-calculus of $A_p$ gives information about the domains of the fractional powers of $A_p$. Indeed, if $p$ is in the same range as in the preceding theorem, these can be computed by complex interpolation
\begin{align}
\label{Eq: Complex interpolation}
 [\L^p_{\sigma} (\Omega) , \dom(A_p)]_{\alpha} = \dom(A_p^{\alpha}) \qquad (\alpha \in (0 , 1)),
\end{align}
see~\cite[Thm.~6.6.9]{Haase}. This property will be crucial in the following section.

\section{The square root of $A_p$ and mapping properties of the semigroup}
\label{Sec: Square root}

With the considerations of Section~\ref{Sec: Notation} we can prove Theorem~\ref{Thm: Square root}. Note that this proof is motivated by a calculation Shen performed in~\cite[Lem.~3.5]{Shen-Riesz_transform}.

\begin{proof}[\upshape \bfseries Proof of Theorem~\ref{Thm: Square root}]
To obtain the embedding $\W^{1 , p}_{0 , \sigma} (\Omega) \subset \dom(A_p^{1 / 2})$ combine~\eqref{Eq: Complex interpolation} together with Lemma~\ref{Lem: Embedding} to get the continuous embedding
\begin{align*}
 [\L^p_{\sigma} (\Omega) , \W^{2 , p}_{0 , \sigma} (\Omega)]_{1 / 2} \subset [\L^p_{\sigma} (\Omega) , \dom(A_p)]_{1 / 2} = \dom(A_p^{1 / 2}).
\end{align*}
Now, the desired embedding follows since the interpolation space on the left-hand side is known~\cite[Prop.~2.10, Thm.~2.12]{Mitrea_Monniaux} to be
\begin{align*}
 [\L^p_{\sigma} (\Omega) , \W^{2 , p}_{0 , \sigma} (\Omega)]_{1 / 2} = \W^{1 , p}_{0 , \sigma} (\Omega).
\end{align*}
\indent To obtain the opposite embedding, let $F \in \C_c^{\infty} (\Omega ; \IC^{d \times d})$ and let $p^{\prime}$ be the H\"older conjugate exponent to $p$. Consider the Stokes problem
\begin{align*}
\left\{ \begin{aligned}
 - \Delta u + \nabla \pi &= \IP_{p^{\prime}} \divergence(F) \quad &&\text{in } \Omega \\
 \divergence(u) &= 0 &&\text{in } \Omega \\
 u &= 0 &&\text{on } \partial \Omega.
\end{aligned} \right.
\end{align*}
Because $(\IP_{p^{\prime}} - \Id) \divergence (F) = \nabla g$ for some $g \in \W^{1 , p^{\prime}} (\Omega)$ by Theorem~\ref{Thm: Fabes_Mendez_Mitrea}, the functions $u$ and $\pi - g$ solve the Stokes problem with right-hand side being $\divergence(F)$. Moreover, since $\divergence(F)$ induces a functional in $(\W^{1 , p}_0 (\Omega ; \IC^d))^*$ obeying the estimate
\begin{align*}
 \| \divergence(F) \|_{(\W^{1 , p}_0)^*} \leq \| F \|_{\L^{p^{\prime}}(\Omega ; \IC^{d \times d})},
\end{align*}
we find by~\cite[Thm.~10.6.2]{Mitrea_Wright} that
\begin{align}
\label{Eq: Gradient of the weak solution}
 \| \nabla A_{p^{\prime}}^{-1} \IP_{p^{\prime}} \divergence(F) \|_{\L^{p^{\prime}} (\Omega ; \IC^{d^2})} = \| \nabla u \|_{\L^{p^{\prime}} (\Omega ; \IC^{d^2})} \leq C \| F \|_{\L^{p^{\prime}}(\Omega ; \IC^{d \times d})}.
\end{align}
Here, the constant $C > 0$ is independent of $F$. Appealing to the first part of this proof and to Poincar\'e's inequality delivers
\begin{align}
\label{Eq: Dual gradient estimate}
\begin{aligned}
 \| A_{p^{\prime}}^{- 1 / 2} \IP_{p^{\prime}} \divergence (F) \|_{\L^{p^{\prime}}_{\sigma} (\Omega)} &= \| A_{p^{\prime}}^{1 / 2} A_{p^{\prime}}^{- 1} \IP_{p^{\prime}} \divergence (F) \|_{\L^{p^{\prime}}_{\sigma} (\Omega)} \\
 &\leq C \| \nabla A_{p^{\prime}}^{-1} \IP_{p^{\prime}} \divergence(F) \|_{\L^{p^{\prime}} (\Omega ; \IC^{d^2})} \\
 &\leq C \| F \|_{\L^{p^{\prime}} (\Omega ; \IC^{d \times d})}.
\end{aligned}
\end{align}
Notice that if $\iota : \L^p_{\sigma} (\Omega) \to \L^p (\Omega ; \IC^d)$ denotes the canonical embedding of $\L^p_{\sigma} (\Omega)$ into $\L^p (\Omega ; \IC^d)$, then $\iota^* = \IP_{p^{\prime}}$ and $\nabla A_p^{- 1 / 2} = \nabla \iota A_p^{- 1 / 2}$. Thus, the continuous embedding $\dom(A_p^{1 / 2}) \subset \W^{1 , p}_{0 , \sigma} (\Omega)$ follows by~\eqref{Eq: Dual gradient estimate} by duality.
\end{proof}

\begin{remark}
In the three-dimensional case,~\eqref{Eq: Gradient of the weak solution} is known due to Brown and Shen~\cite[Thm.~2.9]{Brown_Shen} and if $d \geq 3$ it was also proved by Geng and Kilty~\cite[Thm.~1.3]{Geng_Kilty} in the case where $\partial \Omega$ is connected. If $\Omega$ is a bounded and smooth domain, then~\eqref{Eq: Dual gradient estimate} is due to Giga and Miyakawa~\cite[Lem.~2.1]{Giga_Miyakawa}.
\end{remark}

Having determined the domain of the square root of the Stokes operator, the proof of Corollary~\ref{Cor: Lp-Lq} follows immediately from the parabolic smoothing estimate $\| A^{1 / 2} \e^{- t A} x \|_X \leq C t^{- 1 / 2} \| x \|_X$ which is valid for all generators $-A$ of bounded analytic semigroups on a Banach space $X$, see~\cite[Prop.~3.4.3]{Haase}. The $\L^p$-$\L^q$-estimates of the Stokes semigroup are then a straightforward consequence of the gradient estimates and Gagliardo--Nirenberg's inequality~\cite[p.~125]{Nirenberg}.

\section{Existence theory to the Navier--Stokes equations}

\label{Sec: Nonlinear}

When dealing with the Navier--Stokes equations, the space dimension $d$ will be assumed to be $3$ in this section.

\subsection{Solvability in the critical space $\L^{\infty} (0 , T ; \L^3_{\sigma} (\Omega))$ via an iteration scheme}

In this subsection, we discuss the solvability of the Navier--Stokes equations in the mild sense. By this, we mean that $u : [0 , T) \to \W^{1 , r}_{0 , \sigma} (\Omega)$ for some $r \geq 3$ is a continuous function that solves the variation of constants formula 
\begin{align*}
 u (t) = \e^{- t A_r} a - \int_0^t \e^{- (t - s) A_{r / 2}} \IP_{r / 2} (u (s) \cdot \nabla) u (s) \; \d s.
\end{align*}
To obtain such a solution it is a standard strategy to follow the procedure of Giga~\cite[Thm.~4]{Giga_iteration} and Kato~\cite[Thm.~1, Thm.~2]{Kato_iteration}. In this procedure, one defines a successive approximation by
\begin{align}
 u_0 (t) &:=  \e^{- t A_r} a \label{Eq: Initial approximation} \\
 u_{j + 1} (t) &:= u_0 (t) - \int_0^t \e^{- (t - s) A_{r / 2}} \IP_{r / 2} (u_j (s) \cdot \nabla) u_j (s) \; \d s \qquad (j \in \IN). \label{Eq: Nonlinear approximation step}
\end{align}
It is well-known that this sequence converges to a mild solution $u$ whenever the corresponding semigroup operators satisfy
\begin{itemize}
 \item ${\displaystyle \| \e^{- t A_p} f \|_{\L^q_{\sigma} (\Omega)} \leq C t^{- \frac{3}{2} (\frac{1}{p} - \frac{1}{q})} \| f \|_{\L^p_{\sigma} (\Omega)} \quad \text{for all } t > 0 \text{ and } f \in \L^p_{\sigma} (\Omega)}$; \label{Item: Lp-Lq}
 \item ${\displaystyle \| \nabla \e^{- t A_p} f \|_{\L^p(\Omega ; \IC^9)} \leq C t^{- \frac{1}{2}} \| f \|_{\L^p_{\sigma} (\Omega)} \quad \text{for all } t > 0 \text{ and } f \in \L^p_{\sigma} (\Omega)}$, \label{Item: Gradient estimates}
\end{itemize}
for all $1 < p \leq q < \infty$ and a constant $C > 0$ independent of $t$ and $f$. A closer look onto the proofs of Giga and Kato reveals that the estimates in Corollary~\ref{Cor: Lp-Lq} suffice to prove Theorem~\ref{Thm: Critical space}. As the details of this proof are standard, we omit the proof of Theorem~\ref{Thm: Critical space}.

\subsection{An approach via maximal $\L^q$-regularity}
\label{Subsec: An approach via maximal Lq-regularity}

Recall the space $\IE$ defined in~\eqref{Eq: Spaces of maximal regularity}. We say that $u : (0 , \infty) \to \L^p_{\sigma} (\Omega)$ is a strong solution to $\NSE$ if $u \in \IE$, $u$ attains the initial condition in the sense of traces, and there exists $\pi : (0 , \infty) \to \L^p(\Omega)$ such that
\begin{align*}
 \int_{\Omega} \partial_t u (t) \cdot \overline{w} \; \d x + \int_{\Omega} \nabla u (t) \cdot \overline{\nabla w} \; \d x + \int_{\Omega} (u(t) \cdot \nabla) u(t) \cdot \overline{w} \; \d x - \int_{\Omega} \pi \overline{\divergence(w)} \; \d x = \int_{\Omega} f \cdot \overline{w} \; \d x
\end{align*}
holds for every $w \in \W^{1 , p^{\prime}}_0 (\Omega ; \IC^3)$ and almost every $t > 0$, where $1 / p + 1 / p^{\prime} = 1$. \par
In order to derive the existence of solutions to the Navier--Stokes equations via maximal $\L^q$-regularity one usually performs the following steps:
\begin{enumerate}
\item Recast the Navier--Stokes equations on the subspace $\L^p_{\sigma} (\Omega)$ as
\begin{align*}
 \mathrm{(\IP NSE)} \quad \left\{ \begin{aligned}
 \partial_t u + A_p u &= \IP_p f - \IP_p ( u \cdot \nabla ) u \\
 u(0) &= a.
\end{aligned} \right.
\end{align*}
\item Replace the term $\IP_p ( u \cdot \nabla ) u$ by $\IP_p ( v \cdot \nabla ) v$, with $v \in \IE$, and show that for all $v \in \IE$ we have $\IP_p ( v \cdot \nabla ) v \in \L^q(0 , \infty ; \L^p_{\sigma} (\Omega))$. For fixed $v \in \IE$ the maximal $\L^q$-regularity of $A_p$ provides then a unique solution $u_v \in \IE$ to the corresponding linear problem. \label{It: Replaced nonlinearity}
\item For $f$ and $a$ fixed, show that the linear operator mapping $v$ to $u_v$ has a fixed point.
\end{enumerate}

\begin{remark}
To prove the existence of strong solutions to the Navier--Stokes equations this approach is quite standard. However, to assure~\eqref{It: Replaced nonlinearity} it is often necessary to choose $p$ and $q$ in the definition of $\IE$ very large, which makes certain embedding theorems available. On a bounded Lipschitz domain, however, the domain of the Stokes operator lacks to embed into a Sobolev space of second-order and there is the additional restriction of having the semigroup theory only available for $p$ in the interval about $2$ given in~\eqref{Eq: Admissible interval of p's}. That it is still possible to choose $p$ and $q$ properly in the three-dimensional case is presented in the remainder of this article.
\end{remark}

To verify~\eqref{It: Replaced nonlinearity} it is essential to have good embeddings of $\IE$ into a space of the form
\begin{align*}
 \L^r(0 , \infty ; \W^{1 , p} (\Omega ; \IC^3)),
\end{align*}
for some suitable $r > q$, which is desired to be as large as possible. To obtain this embedding, the following two results are of great importance. The first result deals with embedding properties of the domain of the Stokes operator and is in the case $p = 2$ due to Brown and Shen~\cite[Thm.~2.12]{Brown_Shen}, see also Mitrea and Monniaux~\cite[Thm.~5.3]{Mitrea_Monniaux}, and in the case $p > 2$ due to Mitrea and Wright~\cite[Thm.~10.6.2]{Mitrea_Wright}. In the following, the Bessel potential spaces will be denoted by $\H^{s , p}$.

\begin{theorem}[Brown \& Shen, Mitrea \& Wright]
\label{Thm: Embedding domains}
Let $\Omega \subset \IR^3$ be a bounded Lipschitz domain.
\begin{enumerate}
 \item In the case $p = 2$ the continuous embedding $\dom(A_2) \subset \H^{3 / 2 , 2} (\Omega ; \IC^3)$ is valid.
 \item There exists $\delta \in (0 , 1]$ such that for all $2 < p < 3 + \delta / (1 - \delta)$ and all $s \in [0 , 1)$ the continuous embedding $\dom(A_p) \subset \H^{s + 1 / p , p} (\Omega ; \IC^3)$ holds.
\end{enumerate}
\end{theorem}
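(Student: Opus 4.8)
The plan is to reduce both embeddings to the sharp boundary regularity theory for the stationary Stokes system on Lipschitz domains, using the characterization of the Stokes operator in Theorem~\ref{Thm: Characterization Stokes} to realize every element of the domain as the velocity field of a Stokes boundary value problem whose right-hand side lies in the appropriate Lebesgue space. The genuinely analytic content is then supplied by the cited works and need not be reproved.

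For part~(1), I would start with $u \in \dom(A_2)$ and set $f := A_2 u \in \L^2_{\sigma}(\Omega)$. Theorem~\ref{Thm: Characterization Stokes} furnishes a pressure $\pi \in \L^2(\Omega)$ such that the pair $(u , \pi)$ solves
\begin{align*}
 -\Delta u + \nabla \pi = f, \quad \divergence(u) = 0 \text{ in } \Omega, \quad u = 0 \text{ on } \bd\Omega,
\end{align*}
in the distributional sense, with $f \in \L^2$ and vanishing Dirichlet datum. This is precisely the setting in which Brown and Shen~\cite{Brown_Shen} establish the $\H^{3/2,2}$-regularity of the velocity on a three-dimensional Lipschitz domain; invoking their estimate yields $\|u\|_{\H^{3/2,2}(\Omega;\IC^3)} \le C(\|f\|_{\L^2} + \|u\|_{\L^2})$. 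Since $0 \in \rho(A_2)$, the graph norm of $A_2$ is equivalent to $\|A_2 \argdot\|_{\L^2}$, and the continuous embedding follows.

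For part~(2), I would use that for $p > 2$ the domain is the part of $A_2$ in $\L^p_{\sigma}(\Omega)$, so for $u \in \dom(A_p)$ the datum $f := A_p u = A_2 u$ already lies in $\L^p_{\sigma}(\Omega)$. The same characterization again realizes $(u , \pi)$ as a solution of the Stokes problem above, now with $f \in \L^p$. The $\L^p$-regularity theorem of Mitrea and Wright~\cite{Mitrea_Wright} for the Stokes system on Lipschitz domains then provides, for $p$ below the threshold $3 + \delta/(1-\delta)$ dictated by their range of solvability, the velocity estimate $\|u\|_{\H^{s+1/p,p}(\Omega;\IC^3)} \le C(\|f\|_{\L^p} + \|u\|_{\L^p})$ for every $s \in [0,1)$. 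Combined with the equivalence of $\|\argdot\|_{\dom(A_p)}$ and the graph norm, this yields the stated family of embeddings.

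The hard part will not be the operator-theoretic bookkeeping but rather ensuring that the abstractly defined domains genuinely match the hypotheses of the cited regularity theorems. Two points require care: first, that the pressure produced by Theorem~\ref{Thm: Characterization Stokes} is the same pressure for which the layer-potential estimates are formulated, so that the velocity component inherits the full $\H^{s+1/p,p}$-regularity rather than a weaker bound; and second, that the range of admissible $p$ in part~(2), encoded in $\delta$, is exactly the interval of exponents for which the Mitrea--Wright solvability---and hence the a priori estimate---holds. The genuinely analytic input (the Rellich-type and square-function estimates on the Lipschitz boundary) is entirely contained in~\cite{Brown_Shen} and~\cite{Mitrea_Wright} and would be cited rather than reproved.
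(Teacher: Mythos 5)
The paper offers no proof of this theorem at all: it is stated purely as a citation of Brown--Shen~\cite[Thm.~2.12]{Brown_Shen} for $p=2$ and Mitrea--Wright~\cite[Thm.~10.6.2]{Mitrea_Wright} for $p>2$. Your reduction --- realizing each $u \in \dom(A_p)$ as the velocity field of a Stokes boundary value problem with datum $A_p u \in \L^p_{\sigma}(\Omega)$ via Theorem~\ref{Thm: Characterization Stokes}, invoking the cited regularity estimates, and using $0 \in \rho(A_p)$ to pass to the graph norm --- is correct and is precisely the bookkeeping that the paper's citation presupposes.
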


The second result that is needed is a consequence of the mixed derivative theorem. The version presented here is essentially due to Denk and Kaip~\cite[Lem.~2.61]{Denk_Kaip} and is extended here to non-vanishing functions at $t = 0$ by a reflection argument.

\begin{proposition}
\label{Prop: Mixed derivative theorem}
Let $\Omega \subset \IR^d$ be a bounded Lipschitz domain and $1 < p , q < \infty$. Then, for every $s \geq 0$ and $\sigma \in [0 , 1]$ the continuous embedding
\begin{align*}
 \W^{1 , q} (0 , \infty ; \L^p(\Omega)) \cap \L^q(0 , \infty ; \H^{s , p} (\Omega)) \subset \H^{\sigma , q} (0 , \infty ; \H^{(1 - \sigma) s , p} (\Omega))
\end{align*}
holds.
\end{proposition}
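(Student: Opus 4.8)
The plan is to reduce the assertion to the corresponding embedding on the full time axis $\IR$, where it is an instance of the mixed derivative theorem, and then to transfer it back to the half-line $(0 , \infty)$ by an even reflection in $t$. This reflection is precisely what permits arbitrary, in particular non-vanishing, boundary values $u(0)$.

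The core is the whole-line embedding
\begin{align*}
 \W^{1 , q}(\IR ; \L^p(\Omega)) \cap \L^q(\IR ; \H^{s , p} (\Omega)) \subset \H^{\sigma , q}(\IR ; \H^{(1 - \sigma) s , p} (\Omega)) .
\end{align*}
To obtain it I would introduce on $\L^q(\IR ; \L^p(\Omega))$ the two commuting sectorial operators $B := (\Id - \partial_t^2)^{1 / 2}$, acting in the time variable, and a space operator $A$ acting in $x$ with $\dom(A^{\beta}) = \L^q(\IR ; \H^{\beta s , p}(\Omega))$ for $\beta \in [0 , 1]$. The operator $B$ has bounded imaginary powers of angle $0$ with $\dom(B^{\theta}) = \H^{\theta , q}(\IR ; \L^p(\Omega))$, and $A$ is modelled on $(\Id - \Delta)^{s / 2}$. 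Since the two operators act in separate variables they commute in the resolvent sense, so the mixed derivative theorem, which is the content of~\cite[Lem.~2.61]{Denk_Kaip}, yields the continuous inclusion $\dom(A) \cap \dom(B) \subset \dom(A^{1 - \sigma} B^{\sigma})$ together with the estimate $\| A^{1 - \sigma} B^{\sigma} u \|_{\L^q(\L^p)} \leq C (\| A u \|_{\L^q(\L^p)} + \| B u \|_{\L^q(\L^p)})$. Rewriting the domains through $\dom(A^{1 - \sigma}) = \L^q(\IR ; \H^{(1 - \sigma) s , p}(\Omega))$ and $\dom(B^{\sigma}) = \H^{\sigma , q}(\IR ; \L^p(\Omega))$, this is exactly the whole-line embedding.

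To pass to the half-line I would use the even reflection $u \mapsto \tilde{u}$ with $\tilde{u}(t) := u(\abs{t})$. Because the time regularity of the input is only of first order, this reflection requires no compatibility condition at $t = 0$ beyond continuity, which holds automatically; hence it maps $\W^{1 , q}(0 , \infty ; \L^p(\Omega)) \cap \L^q(0 , \infty ; \H^{s , p}(\Omega))$ boundedly into the corresponding intersection space over $\IR$, the only point to verify being that the weak derivative of $\tilde{u}$ is the odd reflection of $\partial_t u$ and therefore lies in $\L^q(\IR ; \L^p(\Omega))$. Applying the whole-line embedding to $\tilde{u}$ and then restricting to $(0 , \infty)$, which is bounded on $\H^{\sigma , q}$, gives $u = \tilde{u}|_{(0 , \infty)} \in \H^{\sigma , q}(0 , \infty ; \H^{(1 - \sigma) s , p}(\Omega))$, and the norm bounds compose to the claimed one. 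The use of even reflection on the first-order input is licit since $1 < 1 + 1 / q$, and no condition at $t = 0$ is imposed, which is exactly why non-vanishing $u(0)$ is admissible; the genuine restriction $\sigma \in [0 , 1]$ is the one intrinsic to the interpolation in the mixed derivative theorem.

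The step I expect to be most delicate is the construction of the space operator $A$ on the Lipschitz domain with the exact Bessel-potential scale $\dom(A^{\beta}) = \H^{\beta s , p}(\Omega)$ and a bounded $\H^{\infty}$-calculus, since on a merely Lipschitz $\Omega$ one cannot invoke smooth elliptic regularity to identify the fractional power domains. I would circumvent this by realising $\H^{\, \cdot \, , p}(\Omega)$ as a retract of $\H^{\, \cdot \, , p}(\IR^d)$ through a bounded extension operator $\cE$ with restriction $\cR$ as a left inverse, available for Lipschitz domains on the relevant range of smoothness, so that $A$ together with its functional calculus is transported from the explicit operator $(\Id - \Delta)^{s / 2}$ on $\L^p(\IR^d)$, whose calculus and scale of fractional powers are classical. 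The accompanying identity $[\L^p(\Omega) , \H^{s , p}(\Omega)]_{1 - \sigma} = \H^{(1 - \sigma) s , p}(\Omega)$ used in identifying $\dom(A^{1 - \sigma})$ follows along the same retraction.
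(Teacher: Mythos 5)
Your proposal is correct in substance and follows the same skeleton as the paper's proof: both rest on the mixed derivative theorem of Denk and Kaip (Lem.~2.61), both transfer the spatial domain from $\IR^d$ to $\Omega$ by a bounded extension operator, and both handle the non-vanishing time trace by an even reflection in $t$. The differences are in packaging. You re-derive the abstract embedding on the full time line from the commuting pair $B=(\Id-\partial_t^2)^{1/2}$ and a spatial operator $A$, and then reflect via $u \mapsto u(\abs{\cdot})$; the paper instead quotes the half-line statement with zero time trace directly and reduces to it by reflecting, cutting off, and translating so that the extended function vanishes near $t=0$. Both reductions are legitimate. The one step where your route is shakier than the paper's is the construction of the spatial operator on the Lipschitz domain: transporting $(\Id-\Delta)^{s/2}$ through a retraction $\cR(\argdot)\cE$ does not automatically produce a sectorial operator whose fractional power domains are $\H^{\beta s , p}(\Omega)$ and whose functional calculus is inherited, since the retraction does not commute with the operator and $\cR T^{\beta} \cE \neq (\cR T \cE)^{\beta}$ in general. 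This is easily repaired, and the repair is exactly what the paper does: no operator on $\Omega$ is needed at all, only the embedding, so one applies the $\IR^d$ embedding to the Stein extension of $u$ in the space variable and restricts back to $\Omega$, all spaces in the statement being simultaneous retracts under the same extension--restriction pair. With that substitution your argument is complete.
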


\begin{proof}
Let $\Xi \subset \IR^d$. If $\Xi = \IR^d$, then by~\cite[Lem.~2.61]{Denk_Kaip} the following continuous embedding
\begin{align}
\label{Eq: Embedding for zero time trace}
 \W^{1 , q}_0 (0 , \infty ; \L^p(\Xi)) \cap \L^q(0 , \infty ; \H^{s , p} (\Xi)) \subset \H^{\sigma , q} (0 , \infty ; \H^{(1 - \sigma) s , p} (\Xi))
\end{align}
is valid. To obtain~\eqref{Eq: Embedding for zero time trace} for $\Xi = \Omega$, extend functions on $\Omega$ to all of $\IR^d$ by employing Stein's extension operator~\cite[Thm.~VI.3.5]{Stein} and apply~\eqref{Eq: Embedding for zero time trace} in the case $\Xi = \IR^d$ to this extended function. Next, for general
\begin{align*}
 u \in \W^{1 , q} (0 , \infty ; \L^p(\Omega)) \cap \L^q(0 , \infty ; \H^{s , p} (\Omega)),
\end{align*}
extend $u$ to a function $\widetilde{u}$ on $\IR$ by an even reflection and multiply the extended function by a smooth cut-off function $\varphi$ that is one on $[0 , \infty)$ and zero on $(- \infty , -1]$. Finally, the shifted function $[\widetilde{u} \varphi] (\cdot - 2)$ lies in the set on the left-hand side of~\eqref{Eq: Embedding for zero time trace} with $\Xi = \Omega$ and the corresponding continuous embedding implies the statement of the proposition.
\end{proof}

\begin{proposition}
\label{Prop: Maximal regularity embedding}
Let $\Omega \subset \IR^3$ be a bounded Lipschitz domain, $\delta \in (0 , 1]$ as in Theorem~\ref{Thm: Embedding domains}, and $2 \leq p < 3 + \delta / (1 - \delta)$. Then, for
\begin{align*}
 1 < s < 1 + \frac{1}{p}, \quad \text{if} \quad p > 2 \qquad \text{and} \qquad 1 < s \leq \frac{3}{2}, \quad \text{if} \quad p = 2
\end{align*}
and $1 < q < s / (s - 1)$ the following continuous embedding holds
\begin{align*}
\IE \subset \L^{\frac{s q}{s - s q + q}} (0 , \infty ; \W^{1 , p} (\Omega ; \IC^3)).
\end{align*}
\end{proposition}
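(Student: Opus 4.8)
The plan is to obtain the claimed embedding as a composition of three continuous inclusions: first trade the abstract space $\dom(A_p)$ for a concrete Bessel potential space by Theorem~\ref{Thm: Embedding domains}; then invoke the mixed derivative theorem, Proposition~\ref{Prop: Mixed derivative theorem}, to convert part of the full spatial regularity of an element of $\IE$ into regularity in time, tuned so that exactly one spatial derivative survives; and finally apply a Sobolev embedding in the time variable to pass from a fractional-in-time Bochner--Sobolev space to an $\L^r$-space in time, reading off $r = sq/(s - sq + q)$.

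First I would record that, for the admissible ranges of $p$ and $s$, one has the continuous embedding $\dom(A_p) \subset \H^{s,p}(\Omega;\IC^3)$. Indeed, for $p > 2$ Theorem~\ref{Thm: Embedding domains} gives $\dom(A_p) \subset \H^{\tilde s + 1/p, p}$ for every $\tilde s \in [0,1)$; choosing $\tilde s := s - 1/p$, which lies in $(1 - 1/p, 1) \subset [0,1)$ precisely because $1 < s < 1 + 1/p$, yields the claim. For $p = 2$, Theorem~\ref{Thm: Embedding domains} gives $\dom(A_2) \subset \H^{3/2,2}$, and since $1 < s \le 3/2$ the inclusion $\H^{3/2,2} \subset \H^{s,2}$ closes this case. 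Using also that $\L^p_\sigma(\Omega)$ embeds continuously into $\L^p(\Omega;\IC^3)$, this gives
\[
 \IE \subset \W^{1,q}(0,\infty;\L^p(\Omega)) \cap \L^q(0,\infty;\H^{s,p}(\Omega;\IC^3)).
\]

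Next I would apply Proposition~\ref{Prop: Mixed derivative theorem} with spatial order $s$ and interpolation parameter $\theta := 1 - 1/s$. Since $s > 1$, this $\theta$ lies in $(0,1)$ and is admissible, and it is chosen so that the resulting spatial order is $(1 - \theta)s = 1$; hence, recalling $\H^{1,p}(\Omega) = \W^{1,p}(\Omega)$,
\[
 \IE \subset \H^{\theta,q}(0,\infty;\W^{1,p}(\Omega;\IC^3)).
\]
It then remains to embed $\H^{\theta,q}(0,\infty;X)$ into $\L^r(0,\infty;X)$ for $X = \W^{1,p}(\Omega;\IC^3)$. The one-dimensional Sobolev embedding provides this with $1/r = 1/q - \theta$ whenever $\theta q < 1$. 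A short computation turns $1/r = 1/q - (1 - 1/s)$ into $r = sq/(s - sq + q)$, exactly the target exponent, while the constraint $\theta q < 1$ is equivalent to $q < s/(s-1)$, i.e. the hypothesis on $q$ (which simultaneously guarantees $s - sq + q > 0$, so that $r$ is finite, and forces $r > q$).

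The step requiring the most care is this final temporal Sobolev embedding in the Bochner setting, that is, justifying $\H^{\theta,q}(0,\infty;X) \subset \L^r(0,\infty;X)$ for a general Banach space $X$. This follows because the Bessel potential defining $\H^{\theta,q}$ acts by convolution with a nonnegative scalar kernel that behaves like $\lvert t \rvert^{\theta - 1}$ near the origin and decays at infinity; writing $u = k * g$ with $g \in \L^q(0,\infty;X)$ and using the pointwise bound $\| (k * g)(t) \|_X \le (\lvert k \rvert * \| g(\argdot) \|_X)(t)$, the estimate reduces to the scalar Hardy--Littlewood--Sobolev inequality applied to $t \mapsto \| g(t) \|_X$, with the condition $\theta q < 1$ being exactly what makes that scalar inequality hold with the stated exponent $r$. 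Chaining the three displayed inclusions then yields the continuous embedding asserted in Proposition~\ref{Prop: Maximal regularity embedding}.
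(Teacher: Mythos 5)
Your argument is correct and is precisely the chain the paper intends: Theorem~\ref{Thm: Embedding domains} to get $\dom(A_p)\subset \H^{s,p}$, Proposition~\ref{Prop: Mixed derivative theorem} with $\sigma=1-1/s$ so that $(1-\sigma)s=1$, and the temporal Sobolev embedding $\H^{\sigma,q}(0,\infty;X)\subset \L^{r}(0,\infty;X)$ with $1/r=1/q-\sigma$, whose exponent arithmetic and admissibility conditions you verify correctly. The paper gives only a one-line proof citing these three ingredients, so your write-up is simply a fleshed-out version of the same route.
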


\begin{proof}
The proposition readily follows by combining Theorem~\ref{Thm: Embedding domains} and Proposition~\ref{Prop: Mixed derivative theorem} together with Sobolev's embedding theorem.
\end{proof}

Having a suitable embedding of $\IE$ at hand, we can start to estimate the nonlinear term. To do so, the following theorem of Brown and Shen~\cite[Thm.~3.1]{Brown_Shen} is the final ingredient.

\begin{theorem}[Brown \& Shen]
\label{Thm: Brown_Shen}
Let $\Omega \subset \IR^3$ be a bounded Lipschitz domain. Then there exists a constant $C > 0$ depending only on the Lipschitz character of $\Omega$ such that
\begin{align*}
 \| u \|_{\L^{\infty} (\Omega ; \IC^3)} \leq C \| \nabla u \|_{\L^2(\Omega ; \IC^9)}^{1 / 2} \| A_2 u \|_{\L^2_{\sigma}(\Omega)}^{1 / 2} \qquad (u \in \dom(A_2)).
\end{align*}
\end{theorem}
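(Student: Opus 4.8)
The plan is to read the asserted bound as an Agmon-type interpolation inequality in the spectral scale of $A_2$. Since $A_2$ is selfadjoint and positive with $0 \in \rho(A_2)$, and since $\dom(A_2^{1/2}) = \W^{1 , 2}_{0 , \sigma} (\Omega)$ with $\|A_2^{1/2} u\|_{\L^2_{\sigma}(\Omega)} \approx \|\nabla u\|_{\L^2(\Omega ; \IC^9)}$ (the case $p = 2$ of Theorem~\ref{Thm: Square root}, which here is immediate from the form definition of $A_2$ by Kato's second representation theorem), the claim is equivalent to
\[
 \|u\|_{\L^{\infty}(\Omega ; \IC^3)} \leq C \|A_2^{1/2} u\|_{\L^2_{\sigma}(\Omega)}^{1/2} \, \|A_2 u\|_{\L^2_{\sigma}(\Omega)}^{1/2} \qquad (u \in \dom(A_2)) .
\]
I would prove this through the Stokes semigroup, running the classical Agmon argument with the sharp frequency cut-off replaced by the analytic smoothing of $\e^{- t A_2}$.

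The single analytic input I would isolate is the ultracontractive bound
\[
 \|\e^{- t A_2}\|_{\Lop(\L^2_{\sigma}(\Omega) , \L^{\infty}(\Omega ; \IC^3))} \leq C \, t^{- 3/4} \qquad (t > 0) ,
\]
i.e. the $\IR^3$-Gaussian scaling $t^{-d/4}$ with $d = 3$. I want to stress that this does \emph{not} follow from the embedding $\dom(A_2) \subset \H^{3/2 , 2}(\Omega ; \IC^3)$ of Theorem~\ref{Thm: Embedding domains}: in three dimensions $\H^{3/2 , 2}$ is exactly the borderline space that fails to embed into $\L^{\infty}$, and correspondingly $\|A_2 u\|_{\L^2_{\sigma}(\Omega)}$ carries strictly more information than $\|u\|_{\H^{3/2 , 2}}$. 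Establishing this $\L^2$-$\L^{\infty}$ smoothing estimate on a mere Lipschitz domain---where the nonlocality of the Helmholtz projection $\IP_2$ obstructs Gaussian kernel bounds for $\e^{- t A_2}$---is the genuine obstacle, and it is precisely the Lipschitz-domain resolvent analysis that Brown and Shen supply.

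Granting the bound, the remainder is soft. First, the subordination identity $A_2^{-1/2} = \Gamma(1/2)^{-1} \int_0^{\infty} s^{-1/2} \e^{- s A_2} \, \d s$ gives
\[
 \|\e^{- t A_2} A_2^{-1/2}\|_{\Lop(\L^2_{\sigma} , \L^{\infty})} \leq \frac{C}{\Gamma(1/2)} \int_0^{\infty} s^{-1/2} (t + s)^{-3/4} \, \d s = C' t^{-1/4} ,
\]
the integral converging after the rescaling $s = t \tau$; hence $\|\e^{- t A_2} u\|_{\L^{\infty}} \leq C' t^{-1/4} \|A_2^{1/2} u\|_{\L^2_{\sigma}} \leq C'' t^{-1/4} \|\nabla u\|_{\L^2}$. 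Next I would use the identity
\[
 u = \e^{- t A_2} u + \int_0^t \e^{- s A_2} A_2 u \, \d s \qquad (u \in \dom(A_2)) ,
\]
valid because $- A_2$ generates the analytic semigroup. Taking $\L^{\infty}$-norms (Minkowski's integral inequality) and applying the ultracontractive bound to the integrand, $\|\e^{- s A_2} A_2 u\|_{\L^{\infty}} \leq C s^{-3/4} \|A_2 u\|_{\L^2_{\sigma}}$, yields a control of the integral term by $4 C t^{1/4} \|A_2 u\|_{\L^2_{\sigma}}$. Combining the two pieces gives
\[
 \|u\|_{\L^{\infty}} \leq C \big( t^{-1/4} \|\nabla u\|_{\L^2} + t^{1/4} \|A_2 u\|_{\L^2_{\sigma}} \big) \qquad (t > 0) ,
\]
and minimizing over $t > 0$ (the optimum being $t = (\|\nabla u\|_{\L^2} / \|A_2 u\|_{\L^2_{\sigma}})^2$) produces exactly the factor $\|\nabla u\|_{\L^2}^{1/2} \|A_2 u\|_{\L^2_{\sigma}}^{1/2}$, as desired.

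As a remark on alternatives, one could instead split $u = E_R u + (\Id - E_R) u$ along the spectral resolution $\{ E_{\lambda} \}$ of $A_2$ and bound the two pieces by $C R^{1/4} \|\nabla u\|_{\L^2}$ and $C R^{-1/4} \|A_2 u\|_{\L^2_{\sigma}}$ before optimizing in $R$; this is the literal Agmon splitting and is entirely parallel, but it relies on the same frequency-localized $\L^2$-$\L^{\infty}$ bounds for the spectral projections of $A_2$ and therefore does not circumvent the main obstacle.
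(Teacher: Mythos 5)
The statement you were asked to prove is not proved in the paper at all: it is quoted as Theorem~3.1 of Brown and Shen and used as a black box. So the only question is whether your argument is self-contained, and it is not. Your reduction of the inequality to the ultracontractive bound
\begin{align*}
 \| \e^{- t A_2} \|_{\Lop(\L^2_{\sigma} (\Omega) , \L^{\infty} (\Omega ; \IC^3))} \leq C t^{- 3 / 4} \qquad (t > 0)
\end{align*}
is correct and cleanly executed: the identity $\|A_2^{1/2} u\|_{\L^2_{\sigma} (\Omega)} = \|\nabla u\|_{\L^2 (\Omega ; \IC^9)}$ from the form, the subordination formula, the decomposition $u = \e^{- t A_2} u + \int_0^t \e^{- s A_2} A_2 u \, \d s$, and the optimization in $t$ are all fine. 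But you then declare this bound to be ``the genuine obstacle'' and supply no argument for it, deferring to ``the Lipschitz-domain resolvent analysis that Brown and Shen supply.'' That bound \emph{is} the entire content of the theorem. Indeed it is not merely sufficient but equivalent to it: applying the asserted inequality to $u = \e^{- t A_2} f$ and using the analytic-semigroup estimates $\|A_2^{1/2} \e^{- t A_2}\|_{\Lop(\L^2_{\sigma} (\Omega))} \leq C t^{-1/2}$ and $\|A_2 \e^{- t A_2}\|_{\Lop(\L^2_{\sigma} (\Omega))} \leq C t^{-1}$ returns exactly $\|\e^{- t A_2} f\|_{\L^{\infty}} \leq C t^{-3/4} \|f\|_{\L^2_{\sigma}}$. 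Your proof therefore reduces the theorem to an equivalent statement whose only cited justification is the theorem itself. As you correctly note, the embedding $\dom(A_2) \subset \H^{3/2 , 2} (\Omega ; \IC^3)$ of Theorem~\ref{Thm: Embedding domains} cannot close this gap, and the $\L^p$-$\L^q$ estimates available in this paper (Corollary~\ref{Cor: Lp-Lq}) stop at exponents $q < 3 + \eps$, nowhere near $q = \infty$; there is no independent off-the-shelf source for the ultracontractivity of the Stokes semigroup on a Lipschitz domain.

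What is actually required is a real-variable argument on the domain itself. Brown and Shen's proof proceeds, in essence, by representing $u = A_2^{-1}(A_2 u)$ through the Green matrix of the Stokes system on $\Omega$ --- whose existence and pointwise bound $\lvert G(x , y) \rvert \leq C \lvert x - y \rvert^{-1}$ rest on the Fabes--Kenig--Verchota layer-potential theory for Lipschitz domains --- splitting the integral at a scale $r$ about the point $x$, estimating the near part by $C r^{1/2} \| A_2 u \|_{\L^2_{\sigma} (\Omega)}$ and the far part by $C r^{-1/2} \| \nabla u \|_{\L^2 (\Omega ; \IC^9)}$, and optimizing in $r$. Your semigroup subordination is an elegant repackaging of that final optimization, but it does not and cannot substitute for the potential-theoretic input, which is where all the difficulty of the Lipschitz setting resides.
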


The following lemma gives the estimate of the nonlinear term and thereby concludes Step~\eqref{It: Replaced nonlinearity} of our three steps agenda.

\begin{lemma}
\label{Lem: Right-hand side in the right space}
Let $\Omega \subset \IR^3$ be a bounded Lipschitz domain, $\delta \in (0 , 1]$ be as in Theorem~\ref{Thm: Embedding domains}, and $2 \leq p < 3 + \delta / (1 - \delta)$. In the case $p = 2$, let $2 \leq q < \infty$, and in the case $p > 2$, let
\begin{align*}
 \frac{2 (p + 1)}{3} < q < \infty.
\end{align*}
Then, there exists $C > 0$ such that for all $v , w \in \IE$
\begin{align*}
 \| ( v \cdot \nabla ) w\|_{\L^q(0 , \infty ; \L^p(\Omega ; \IC^3))} \leq C \|v\|_{\IE} \|w\|_{\IE}.
\end{align*}
\end{lemma}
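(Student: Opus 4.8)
The plan is to estimate $(v \cdot \nabla) w$ pointwise in time through the crude bound
\[
 \| (v(t) \cdot \nabla) w(t) \|_{\L^p} \leq \| v(t) \|_{\L^\infty} \| \nabla w(t) \|_{\L^p},
\]
and then to split the time integrability by H\"older's inequality so that each resulting factor is controlled either by Proposition~\ref{Prop: Maximal regularity embedding} or by the defining norm of $\IE$. The decisive point is that one must \emph{not} bound $\| v(t) \|_{\L^\infty}$ by a plain Sobolev embedding of $\dom(A_p)$: this is too lossy and fails outright at $p = 2$, where $\dom(A_2) \subset \H^{3/2 , 2}$ only reaches the borderline space. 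Instead the multiplicative inequality of Brown and Shen (Theorem~\ref{Thm: Brown_Shen}) should be used, and it is precisely this gain that produces the sharp lower bound $2(p+1)/3$ on $q$.

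First I would transfer the Brown--Shen inequality from $A_2$ to $A_p$. Since $p \geq 2$ and $\Omega$ is bounded, $\L^p(\Omega) \hookrightarrow \L^2(\Omega)$; and for $p > 2$ the operator $A_p$ is by definition the part of $A_2$ in $\L^p_\sigma(\Omega)$, so $\dom(A_p) \subset \dom(A_2)$ with $A_2 u = A_p u$ there (for $p = 2$ this is trivial). Hence for almost every $t$,
\[
 \| v(t) \|_{\L^\infty(\Omega ; \IC^3)} \leq C \| \nabla v(t) \|_{\L^2}^{1/2} \| A_2 v(t) \|_{\L^2}^{1/2} \leq C \| \nabla v(t) \|_{\L^p}^{1/2} \| A_p v(t) \|_{\L^p}^{1/2},
\]
which combined with the pointwise product bound yields
\[
 \| (v(t) \cdot \nabla) w(t) \|_{\L^p} \leq C \| \nabla v(t) \|_{\L^p}^{1/2} \| A_p v(t) \|_{\L^p}^{1/2} \| \nabla w(t) \|_{\L^p}.
\]

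Next I would tune the smoothness parameter in Proposition~\ref{Prop: Maximal regularity embedding} so that the three time-factors fit together exactly. Choosing $s = \tfrac{3q}{3q - 2}$, a direct computation gives $\tfrac{sq}{s - sq + q} = 3q$, so the proposition yields $\nabla v , \nabla w \in \L^{3q}(0 , \infty ; \L^p(\Omega ; \IC^9))$ with norms bounded by $\| v \|_\IE$ and $\| w \|_\IE$; moreover $q < s/(s-1) = 3q/2$ holds automatically, so the proposition indeed applies. As $A_p v \in \L^q(0 , \infty ; \L^p_\sigma(\Omega))$ by the definition of $\IE$, the factor $\| \nabla v \|_{\L^p}^{1/2}$ lies in $\L^{6q}$ in time, $\| A_p v \|_{\L^p}^{1/2}$ in $\L^{2q}$, and $\| \nabla w \|_{\L^p}$ in $\L^{3q}$; since $\tfrac{1}{6q} + \tfrac{1}{2q} + \tfrac{1}{3q} = \tfrac1q$, H\"older's inequality in time gives
\[
 \| (v \cdot \nabla) w \|_{\L^q(0,\infty ; \L^p)} \leq C \| \nabla v \|_{\L^{3q}(0,\infty;\L^p)}^{1/2} \| A_p v \|_{\L^q(0,\infty;\L^p)}^{1/2} \| \nabla w \|_{\L^{3q}(0,\infty;\L^p)} \leq C \| v \|_\IE \| w \|_\IE.
\]

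The only nontrivial verification --- and exactly where the hypothesis on $q$ enters --- is that $s = \tfrac{3q}{3q-2}$ is admissible in Proposition~\ref{Prop: Maximal regularity embedding}, i.e.\ that it does not exceed the maximal Bessel regularity of $\dom(A_p)$ recorded in Theorem~\ref{Thm: Embedding domains}. For $p = 2$ this requires $\tfrac{3q}{3q-2} \leq \tfrac32$, equivalent to $q \geq 2$; for $p > 2$ it requires $\tfrac{3q}{3q-2} < 1 + \tfrac1p$, equivalent to $q > \tfrac{2(p+1)}{3}$. These are precisely the ranges in the statement. The remaining conditions of Proposition~\ref{Prop: Maximal regularity embedding} ($s > 1$ and $q < s/(s-1)$) hold for free, so I expect this admissibility constraint to be the main --- indeed essentially the only --- obstacle; everything else is bookkeeping of exponents.
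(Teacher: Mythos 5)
Your proof is correct and follows essentially the same route as the paper: the pointwise-in-time Brown--Shen bound on $\| v(t) \|_{\L^{\infty}}$, H\"older's inequality in space and time with the exponent split $\tfrac{1}{6q} + \tfrac{1}{2q} + \tfrac{1}{3q} = \tfrac{1}{q}$, and Proposition~\ref{Prop: Maximal regularity embedding} with $s = 3q/(3q-2)$. You even make explicit the admissibility check on $s$ (which is where the thresholds $q \geq 2$ and $q > 2(p+1)/3$ come from) that the paper leaves to the reader.
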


\begin{proof}
Let $v , w \in \IE$. Since $p \geq 2$ we find $\dom(A_p) \subset \dom(A_2)$ so that $v(t) \in \dom(A_2)$ for almost every $t > 0$. By Theorem~\ref{Thm: Brown_Shen} there exists a constant $C > 0$ such that
\begin{align*}
 \|v(t)\|_{\L^{\infty} (\Omega ; \IC^3)} \leq C \|\nabla v(t)\|_{\L^2(\Omega ; \IC^9)}^{1 / 2} \|A_2 v(t)\|_{\L^2_{\sigma}(\Omega)}^{1 / 2} \qquad (\text{a.e.\@ } t > 0).
\end{align*}
Thus, there exists a constant $C > 0$ such that
\begin{align*}
 \int_0^{\infty} \| ( v(t) \cdot \nabla ) w(t)\|_{\L^p(\Omega ; \IC^3)}^q \; \d t &\leq C \int_0^{\infty} \|\nabla v(t)\|_{\L^2(\Omega ; \IC^9)}^{q / 2} \|A_2 v(t)\|_{\L^2_{\sigma}(\Omega)}^{q / 2} \|\nabla w(t)\|_{\L^p(\Omega ; \IC^9)}^q \; \d t.
 \intertext{An application of H\"older's inequality in space and time shows}
 &\leq C \bigg( \int_0^{\infty} \|\nabla v(t)\|_{\L^p(\Omega ; \IC^9)}^{3q} \; \d t \bigg)^{\frac{1}{6}} \bigg( \int_0^{\infty} \|\nabla w(t)\|_{\L^p(\Omega ; \IC^9)}^{3q}  \; \d t \bigg)^{\frac{1}{3}} \\
 &\qquad \cdot \bigg( \int_0^{\infty} \|A_p v(t)\|_{\L^p_{\sigma}(\Omega)}^q \; \d t \bigg)^{\frac{1}{2}}.
\end{align*}
Finally, we would like to appeal to Proposition~\ref{Prop: Maximal regularity embedding}. For this purpose, we have to ensure that there exists a number $s$ subject to the premises given in the very same proposition, such that
\begin{align*}
 1 < q < \frac{s}{s - 1} \quad \text{and} \quad 3q = \frac{s q}{s - s q + q}.
\end{align*}
One readily verifies that $s$ given by $s := 3 q / (3 q - 2)$ meets these requirements. Now, we can use Proposition~\ref{Prop: Maximal regularity embedding} to estimate
\begin{align*}
 \bigg( \int_0^{\infty} \|\nabla v(t)\|_{\L^p(\Omega ; \IC^9)}^{3q} \; \d t \bigg)^{\frac{1}{6}} \bigg( \int_0^{\infty} \|\nabla w(t)\|_{\L^p(\Omega ; \IC^9)}^{3q}  \; \d t \bigg)^{\frac{1}{3}} \leq C \| v \|_{\IE}^{q / 2} \| w \|_{\IE}^q.& \qedhere
\end{align*}
\end{proof}

Let $p$ and $q$ be as in Lemma~\ref{Lem: Right-hand side in the right space} and fix $f \in \L^q(0 , \infty ; \L^p (\Omega ; \IC^3))$ and $a \in (\L^p_{\sigma} (\Omega) ; \dom(A_p))_{1 - 1 / q , q}$. Let $\cT_{(f , a)} : \IE \to \IE$ be the mapping that maps $v$ to $u_v$, where $u_v$ is given by
\begin{align*}
 \left\{ \begin{aligned}
  \partial_t u_v + A_p u_v &= \IP_p f - \IP_p (v \cdot \nabla) v \qquad t > 0 \\
  u_v(0) &= a.
 \end{aligned} \right.
\end{align*}
By Lemma~\ref{Lem: Right-hand side in the right space} the right-hand side lies in $\L^q(0 , \infty ; \L^p_{\sigma} (\Omega))$, so that the maximal $\L^q$-regularity of $A_p$, see Theorem~\ref{Thm: Maximal regularity}, implies that the solution $u_v$ indeed exists and lies in $\IE$. It follows that for each $f$ and $a$ chosen in the spaces above, the mapping $\cT_{(f , a)}$ is well-defined. The following proposition shows that $\cT_{(f , a)}$ has a unique fixed point, provided $f$ and $a$ are small enough. The proof follows exactly the lines of Saal~\cite[Thm.~1.2]{Saal} and is thus omitted.

\begin{proposition}
\label{Prop: Existence of the fixed point}
Let $\Omega \subset \IR^3$ be a bounded Lipschitz domain, $\eps > 0$ be the minimal $\eps$ of Theorems~\ref{Thm: Fabes_Mendez_Mitrea} and~\ref{Thm: Maximal regularity}, $\delta \in (0 , 1]$ as in Theorem~\ref{Thm: Embedding domains}, and $2 \leq p < 3 + \min\{\delta / (1 - \delta) , \eps\}$. In the case $p = 2$, let $2 \leq q < \infty$, and in the case $p > 2$, let
\begin{align*}
 \frac{2 (p + 1)}{3} < q < \infty.
\end{align*}
Then, there exists a constant $C > 0$, such that for all
\begin{align*}
 (f , a) \in \L^q (0 , \infty ; \L^p (\Omega ; \IC^3)) \times (\L^p_{\sigma} (\Omega) , \dom(A_p))_{1 - 1 / q , q}
\end{align*}
with
\begin{align*}
 \| a \|_{(\L^p_{\sigma} (\Omega) , \dom(A_p))_{1 - 1 / q , q}} + \| f \|_{\L^q(0 , \infty ; \L^p (\Omega ; \IC^3))} < C,
\end{align*}
there exists a unique fixed point $u \in \IE$ of $\cT_{(f , a)}$.
\end{proposition}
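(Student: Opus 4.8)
The plan is to obtain $u$ via the Banach fixed point theorem applied to $\cT_{(f,a)}$ on a small closed ball of $\IE$. Write $M$ for the norm of the inverse of the isomorphism $\partial_t + A_p \colon \IE \to \IF$ supplied by the maximal $\L^q$-regularity of Theorem~\ref{Thm: Maximal regularity}, let $N := \|\IP_p\|_{\Lop(\L^p(\Omega ; \IC^3))}$, which is finite by Theorem~\ref{Thm: Fabes_Mendez_Mitrea} (this is where $p < 3 + \eps$ enters), and let $C_0$ be the constant from the bilinear estimate of Lemma~\ref{Lem: Right-hand side in the right space} (where the remaining hypotheses on $p$ and $q$ are used). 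First I would record the linear a priori bound: since the trace space of $\IE$ is exactly $(\L^p_{\sigma} (\Omega) , \dom(A_p))_{1 - 1 / q , q}$, maximal regularity gives $\|w\|_{\IE} \leq M(\|g\|_{\L^q(0 , \infty ; \L^p_{\sigma} (\Omega))} + \|a\|_{(\L^p_{\sigma} (\Omega) , \dom(A_p))_{1 - 1 / q , q}})$ for the solution $w$ of $\partial_t w + A_p w = g$, $w(0) = a$. Applying this to $g = \IP_p f - \IP_p (v \cdot \nabla) v$ and combining with Lemma~\ref{Lem: Right-hand side in the right space} and the boundedness of $\IP_p$ yields
\[
 \|\cT_{(f,a)} v\|_{\IE} \leq M\big( N \|f\|_{\L^q(0 , \infty ; \L^p (\Omega ; \IC^3))} + N C_0 \|v\|_{\IE}^2 + \|a\|_{(\L^p_{\sigma} (\Omega) , \dom(A_p))_{1 - 1 / q , q}} \big).
\]

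Next I would establish the self-mapping and contraction properties. Abbreviate $D := N \|f\|_{\L^q(0 , \infty ; \L^p (\Omega ; \IC^3))} + \|a\|_{(\L^p_{\sigma} (\Omega) , \dom(A_p))_{1 - 1 / q , q}}$ and consider the ball $\overline{B}_{\rho} := \{ v \in \IE : \|v\|_{\IE} \leq \rho \}$. For the Lipschitz estimate, note that $\cT_{(f,a)} v_1 - \cT_{(f,a)} v_2$ solves the linear problem with zero initial value and right-hand side $- \IP_p [(v_1 \cdot \nabla) v_1 - (v_2 \cdot \nabla) v_2]$; the decomposition $(v_1 \cdot \nabla) v_1 - (v_2 \cdot \nabla) v_2 = (v_1 \cdot \nabla)(v_1 - v_2) + ((v_1 - v_2) \cdot \nabla) v_2$, together with Lemma~\ref{Lem: Right-hand side in the right space}, gives
\[
 \|\cT_{(f,a)} v_1 - \cT_{(f,a)} v_2\|_{\IE} \leq M N C_0 (\|v_1\|_{\IE} + \|v_2\|_{\IE}) \|v_1 - v_2\|_{\IE}.
\]
Choosing $\rho := (4 M N C_0)^{-1}$ makes this Lipschitz constant at most $1/2$ on $\overline{B}_{\rho}$, and the a priori bound reduces to $\|\cT_{(f,a)} v\|_{\IE} \leq M D + M N C_0 \rho^2 = M D + \rho / 4$ for $v \in \overline{B}_{\rho}$. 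Hence, as soon as $M D \leq 3 \rho / 4$ --- which is guaranteed by imposing $\|a\|_{(\L^p_{\sigma} (\Omega) , \dom(A_p))_{1 - 1 / q , q}} + \|f\|_{\L^q(0 , \infty ; \L^p (\Omega ; \IC^3))} < C$ for a sufficiently small threshold $C = C(M , N , C_0) > 0$ --- the map $\cT_{(f,a)}$ sends $\overline{B}_{\rho}$ into itself and is a contraction there, so Banach's fixed point theorem produces the asserted fixed point $u \in \overline{B}_{\rho} \subset \IE$.

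Once the nonlinear estimate of Lemma~\ref{Lem: Right-hand side in the right space} is in place the argument is essentially bookkeeping, so I do not expect a genuine obstacle; the points requiring care are only of a structural nature. The first is that all three ingredients --- maximal regularity, boundedness of $\IP_p$, and the bilinear bound --- must be simultaneously available, which is precisely why $p$ is confined to $2 \leq p < 3 + \min\{\delta / (1 - \delta) , \eps\}$ and $q$ to the stated range. The second is that the uniqueness produced by the contraction principle is uniqueness within $\overline{B}_{\rho}$, so the $u$ of the statement is to be read as the unique small fixed point; tracking the dependence of $C$ on $M$, $N$, and $C_0$ is then routine, and following Saal~\cite{Saal} this is exactly the regime in which the scheme closes.
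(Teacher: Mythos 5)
Your argument is correct and is exactly the standard contraction-mapping scheme that the paper itself invokes by deferring to Saal's Theorem 1.2: maximal regularity for the linear a priori bound, boundedness of $\IP_p$, the bilinear estimate of Lemma~\ref{Lem: Right-hand side in the right space} with the usual decomposition of the difference of nonlinearities, and a small ball whose radius is tuned to the product of these constants. Your remark that the contraction principle only yields uniqueness within the small ball is a fair reading of the statement and matches the intended (omitted) proof.
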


The proof of Theorem~\ref{Thm: Solutions via maximal regularity} is now a mere reformulation of Proposition~\ref{Prop: Existence of the fixed point} and is thus omitted.

\begin{remark}
In contrast to the result of Taylor~\cite[Prop.~3.1]{Taylor}, Proposition~\ref{Prop: Existence of the fixed point} allows us to construct not only mild, but \textit{strong} solutions to the Navier--Stokes equations in bounded Lipschitz domains, that also satisfy Serrin's uniqueness condition. Mitrea and Monniaux~\cite{Mitrea_Monniaux} construct mild solutions with no external force and initial conditions in $\dom(A_2^{1 / 4})$. They also prove~\cite[Thm.~6.4]{Mitrea_Monniaux} that this mild solution is a strong solution in the class
\begin{align*}
 \W^{1 , q} (0 , T ; \L^2_{\sigma} (\Omega)) \cap \L^q (0 , T ; \dom(A_2)) \qquad (1 < q < \tfrac{4}{3}).
\end{align*}
Thus, Proposition~\ref{Prop: Existence of the fixed point} can be regarded as a counterpart of this result in classes of high regularity.
\end{remark}


\begin{bibdiv}
\begin{biblist}

\bibitem{Amann}
\textsc{H.~Amann}.
\newblock {\em On the strong solvability of the {N}avier--{S}tokes equations\/}.
\newblock J.\@ Math.\@ Fluid Mech.\@ \textbf{2} (2000), no.~1, 16--98.

\bibitem{Brown_Shen}
\textsc{R.~M.~Brown} and \textsc{Z.~Shen}.
\newblock {\em Estimates for the {S}tokes operator in {L}ipschitz domains}.
\newblock Indiana Univ. Math. J. \textbf{44} (1995), no.~4, 1183--1206.

\bibitem{DHP}
\textsc{R.~Denk}, \textsc{M.~Hieber}, and \textsc{J.~Pr\"uss}.
\newblock {\em $\cR$-boundedness, Fourier multipliers and problems of elliptic and parabolic type\/}.
\newblock Mem. Amer. Math. Soc. \textbf{166} (2003), no.~788.

\bibitem{Denk_Kaip}
\textsc{R.~Denk} and \textsc{M.~Kaip}.
\newblock General {P}arabolic {M}ixed {O}rder {S}ystems in $\L_p$ and {A}pplications. {O}perator {T}heory: {A}dvances and {A}pplications, vol.~239.
\newblock Birkh\"auser/Springer, Cham, 2013.

\bibitem{Deuring_von-Wahl}
\textsc{P.~Deuring} and \textsc{W.~von~Wahl}.
\newblock {\em Strong solutions of the Navier--Stokes system in Lipschitz bounded domains\/}.
\newblock Math.\@ Nachr.\@ \textbf{171} (1995), 111--148.

\bibitem{Fabes_Mendez_Mitrea}
\textsc{E.~Fabes}, \textsc{O.~Mendez}, and \textsc{M.~Mitrea}.
\newblock {\em Boundary layers on {S}obolev-{B}esov spaces and {P}oisson's equation for the {L}aplacian in {L}ipschitz domains\/}.
\newblock J. Funct. Anal. \textbf{159} (1998), no.~2, 323--368.

\bibitem{Geissert_et_al}
\textsc{M.~Geissert}, \textsc{M.~Hess}, \textsc{M.~Hieber}, \textsc{C.~Schwarz}, and \textsc{K.~Stavrakidis}.
\newblock {\em Maximal $L^p - L^q$-Estimates for the Stokes Equation: a Short Proof of Solonnikov's Theorem\/}.
J.\@ Math.\@ Fluid Mech.\@ \textbf{12} (2010), no.\@ 1, 47--60.

\bibitem{Geng_Kilty}
\textsc{J.~Geng} and \textsc{J.~Kilty}.
\newblock {\em The $L^p$ regularity problem for the {S}tokes system on {L}ipschitz domains\/}.
\newblock J.\@ Differential Equations \textbf{259} (2015), no.~4, 1275--1296.

\bibitem{Giga_fractional}
\textsc{Y.~Giga}.
\newblock {\em Domains of fractional powers of the Stokes operator in $L_r$ spaces\/}.
\newblock Arch.\@ Ration.\@ Mech.\@ Anal.\@ \textbf{89} (1985), no.~3, 251--265.

\bibitem{Giga_iteration}
\textsc{Y.~Giga}.
\newblock {\em Solutions for semilinear parabolic equations in $L^p$ and regularity of weak solutions of the Navier--Stokes system\/}.
\newblock J.\@ Differential Equations \textbf{62} (1986), no.~2, 186--212.

\bibitem{Giga_Miyakawa}
\textsc{Y.~Giga} and \textsc{T.~Miyakawa}.
\newblock {\em Solutions in $L_r$ of the Navier-Stokes initial value problem\/}.
\newblock Arch.\@ Ration.\@ Mech.\@ Anal.~\textbf{89} (1985), no.~3, 267--281.

\bibitem{Giga_Sohr}
\textsc{Y.~Giga} and \textsc{H.~Sohr}.
\newblock {\em Abstract $\L^p$ estimates for the {C}auchy problem with applications to the {N}avier--{S}tokes equations in exterior domains\/}.
\newblock J.\@ Funct.\@ Anal.\@ \textbf{102} (1991), no.~1, 72--94.

\bibitem{Haase}
\textsc{M.~Haase}.
\newblock The {F}unctional {C}alculus for {S}ectorial {O}perators. Operator Theory: Advances and Applications, vol.~169,
\newblock Birkh{\"a}user, Basel, 2006.

\bibitem{Kato_iteration}
\textsc{T.~Kato}.
\newblock {\em Strong $\L^p$-solutions of the Navier--Stokes equations in $\IR^m$, with applications to weak solutions\/}.
\newblock Math.\@ Z.\@ \textbf{187} (1984), no.~4, 471--480.

\bibitem{Fujita_Kato}
\textsc{T.~Kato} and \textsc{H.~Fujita}.
\newblock {\em On the nonstationary Navier--Stokes system\/}.
\newblock Rend.\@ Sem.\@ Mat.\@ Univ.\@ Padova \textbf{32} (1962), 243--260.

\bibitem{Kunstmann_Weis}
\textsc{P.~C.~Kunstmann} and \textsc{L.~Weis}.
\newblock {\em New criteria for the $\H^{\infty}$-calculus and the {S}tokes operator on bounded {L}ipschitz domains\/}.
\newblock J.\@ Evol.\@ Equ.\@ \textbf{17} (2017), no.\@ 1, 387--409.

\bibitem{Ladyzhenskaya}
\textsc{O.~A.~Ladyzhenskaya}.
\newblock The mathematical theory of viscous incompressible flow. Mathematics and its Applications, vol.~2.
\newblock Gordon and Breach Science Publishers, New York-London-Paris, 1969.

\bibitem{Mitrea_Monniaux}
\textsc{M.~Mitrea} and \textsc{S.~Monniaux}.
\newblock {\em The regularity of the {S}tokes operator and the {F}ujita-{K}ato approach to the {N}avier-{S}tokes initial value problem in {L}ipschitz domains}.
\newblock J. Funct. Anal. \textbf{254} (2008), no.~6, 1522--1574.

\bibitem{Mitrea_Wright}
\textsc{M.~Mitrea} and \textsc{M.~Wright}.
\newblock {\em Boundary value problems for the {S}tokes system in arbitrary {L}ipschitz domains}.
\newblock Ast\'erisque \textbf{344} (2012).

\bibitem{Nirenberg}
\textsc{L.~Nirenberg}.
\newblock {\em On elliptic partial differential equations.\/}
\newblock Ann.\@ Scuola Norm.\@ Sup.\@ Pisa (3) \textbf{13} (1959), 115--162.

\bibitem{Saal}
\textsc{J.~Saal}.
\newblock {\em Strong solutions for the Navier--Stokes equations on bounded and unbounded domains with a moving boundary\/}.
\newblock Electron.\@ J.\@ Differ.\@ Equ.\@ Conf.\@ \textbf{15} (2007), 365--375.

\bibitem{Shen-Riesz_transform}
\textsc{Z.~Shen}.
\newblock {\em Bounds on {R}iesz transforms on $L^p$-spaces for second order elliptic operators\/}.
\newblock Ann. Inst. Fourier (Grenoble) \textbf{55} (2005), no.~1, 173--197.

\bibitem{Shen-Stokes}
\textsc{Z.~Shen}.
\newblock {\em Resolvent estimates in $\L^p$ for the {S}tokes operator in {L}ipschitz domains\/}.
\newblock Arch. Ration. Mech. Anal. (2012), no.~2, 395--424.

\bibitem{Sohr}
\textsc{H.~Sohr}.
\newblock The {N}avier-{S}tokes equations. {A}n elementary functional analytic approach. Birkh\"auser Advanced Texts: Basler Lehrb\"ucher.
\newblock Birkh\"auser Verlag, Basel, 2001.

\bibitem{Solonnikov}
\textsc{V.~A.~Solonnikov}.
\newblock {\em Estimates for solutions of nonstationary Navier--Stokes equations\/}.
\newblock J.\@ Soviet Math.\@ \textbf{8} (1977), no.~4, 467--529.

\bibitem{Stein}
\textsc{E.~M.~Stein}.
\newblock Singular integrals and differentiability properties of functions.
\newblock Princeton University Press, Princeton, 1986.

\bibitem{Taylor}
\textsc{M.~E.~Taylor}.
\newblock {\em Incompressible fluid flows on rough domains\/}. 
\newblock In: Semigroups of operators: theory and applications (Newport Beach, CA, 1998). Progr.\@ Nonlinear Differential Equations Appl., vol.~42, Birkh\"auser, Basel, 2000, 320--334.

\bibitem{Temam}
\textsc{R.~Temam}.
\newblock Navier-Stokes equations and nonlinear functional analysis. CBMS-NFS Regional Conference Series in Applied Mathematics, 66.
\newblock SIAM, Philadelphia, PA, 1995.

\bibitem{Tolksdorf}
\textsc{P.~Tolksdorf}.
\newblock On the $\L^p$-theory of the Navier--Stokes equations on Lipschitz domains.
\newblock Technische Universit\"at Darmstadt, Darmstadt, 2017, http://tuprints.ulb.tu-darmstadt.de/5960/.

\end{biblist}
\end{bibdiv}

\end{document}